\newcommand{\mao}{MA}
\newcommand{\delu}{v}
\newcommand{\trace}{\text{trace}}
\newcommand{\bq}{\begin{equation}}
\newcommand{\eq}{\end{equation}}
\newcommand{\R}{\mathbb{R}}
\newcommand{\dm}{d}
\newcommand{\Rd}{\R^\dm}
\newcommand{\Dt}{\mathcal{D}}
\newcommand{\abs}[1]{\left\vert#1\right\vert}
\newcommand{\norm}[1]{\left\vert#1\right\vert}
\newcommand{\DD}[2]{\frac{\partial^2 #1}{\partial {#2}^2}}
\newcommand{\MA}{{Monge-Amp\`ere}\xspace}
\newcommand{\grad}{\nabla}
\newcommand{\e}{\epsilon}
\newcommand{\ex}[1]{\times 10^{- #1}}
\newcommand{\G}{\mathcal{G}}
\newcommand{\bO}{\mathcal{O}}
\newcommand{\xv}{\mathbf{x}}
\newcommand{\xo}{\mathbf{x}_0}
\newtheorem{theorem}{Theorem}
\theoremstyle{definition}
\newtheorem{lemma}[theorem]{Lemma}
\newtheorem{definition}{Definition}
\theoremstyle{remark}
\newtheorem{example}{Example}
\newtheorem{remark}{Remark}
\begin{document}

\title[Finite difference solvers for the Monge-Amp\`ere equation]{Convergent finite difference solvers for viscosity solutions of the elliptic Monge-Amp\`ere equation in dimensions two and higher}

\author{Brittany D. Froese \and Adam M. Oberman}
\address{{Department of Mathematics, Simon Fraser University\\ Burnaby, British Columbia, Canada, V5A 1S6}}
\email{aoberman,bdf1@sfu.ca}

\begin{abstract}
The elliptic Monge-Amp\`ere equation is a fully nonlinear Partial Differential Equation  that originated in geometric surface theory and has been applied in dynamic meteorology, elasticity, geometric optics, image processing and image registration.   Solutions can be singular, in which case standard numerical approaches fail.  Novel solution methods are required for stability and convergence to the weak (viscosity) solution.

In this article we build a wide stencil finite difference discretization for the \MA equation.  The scheme is monotone, so the Barles-Souganidis theory allows us to prove that the solution of the scheme converges to the unique viscosity solution of the equation.

Solutions of the scheme are found using a damped Newton's method.
We prove convergence of Newton's method and provide a systematic method to determine a starting point for the Newton iteration.

Computational results are presented in two and three dimensions, which demonstrates the speed and accuracy of the method on a number of exact solutions, which range in regularity from smooth to non-differentiable.
\end{abstract}

\date{\today}    
\keywords{Fully Nonlinear Elliptic Partial Differential Equations, Monge Amp\`ere equations, Nonlinear Finite Difference Methods, Viscosity Solutions, Monotone Schemes, Convexity Constraints}
\maketitle


\section{Introduction}
In this article we introduce a monotone discretization of the \MA equation, which is valid in arbitrary dimensions.  A proof of convergence to the viscosity solution is provided, as well as a proof of convergence of Newton's method.   Numerical results are also presented.

\subsection{The setting for equation}

The \MA equation is a fully nonlinear Partial Differential Equation (PDE).  
\bq
\label{MA}\tag{MA}
\det(D^2u(x)) = f(x), \quad \text{for $x$ in }\Omega.
\eq
The \MA operator, $\det(D^2u)$,  is the determinant of the Hessian of the function $u$.   The equation is augmented by the convexity constraint
\bq\label{convex}\tag{C}
u \text{ is convex, }
\eq
which is necessary for the equation to be elliptic.  
The convexity constraint is made explicit for emphasis; it is necessary for uniqueness of solutions and it is essential for numerical stability.  

While other boundary conditions appear naturally in applications, we consider the simplest boundary conditions: the Dirichlet problem in a convex bounded subset~$ \Omega \subset \Rd$ 
with boundary~$\partial\Omega$,
\bq\tag{D}\label{Dirichlet}
u(x) = g(x), \quad \text{for $x$ on }\partial\Omega.
\eq
Under suitable assumptions on the domain and the functions $f(x), g(x)$, recalled in~\autoref{sec:regularity}, there exists a unique classical ($C^2$) solution to~\eqref{MA},~\eqref{convex},~\eqref{Dirichlet}.  
However, when these conditions fail, the solution can be singular.  
For singular solutions, the correct notion of weak solution must be used.
In this case, novel discretizations and solution methods must be used to approximate the solution.

\subsection{Applications}
The PDE~\eqref{MA} is a geometric equation, which goes back to Monge and Amp\`ere (see~\cite{EvansSurvey}).   
The equation naturally arises in geometric problems:
the Minkowski problem for prescribing the Gaussian curvature of a surface~\cite{Bakelman, PogBook, PrescribingCurvature} and the related isometric embedding problem for manifolds~\cite{IsometricEmbedding}.
 Early applications identified in~\cite{olikerprussner88} include dynamic meteorology, elasticity, and geometric optics.

The \MA equation arises as the optimality condition for the problem of optimal mass transport with quadratic cost~\cite{EvansSurvey, Ambrosio,Villani}.
This application of the \MA equation has been used in many areas:
image registration~\cite{Haker, HakerRegistration, RehmanRegistration},
 mesh generation~\cite{Delzanno, DelzannoGrid, Budd}, 
 reflector design~\cite{GlimmOlikerReflectorDesign}, 
 and astrophysics (estimating the shape of the early universe)~\cite{FrischUniv}.

\newcommand{\fv}{\mathbf{g}}

The optimal transportation problem seeks a map $\fv(x)$ that moves the measure $\mu_1(x)$ to $\mu_2(y)$ and minimizes the transportation cost functional
\[ 
\int_{\R^d} \norm{ x-\fv(x) }^2 \,d\mu_1. 
\]
The optimal mapping is given by $\fv = \grad u$,  where $u$ is convex and satisfies the \MA equation
\[ \det(D^2u(x)) = \mu_1(x)/\mu_2(\nabla u(x)). \]  
In this situation, the Dirichlet boundary condition~\eqref{Dirichlet} is replaced by the implicit boundary condition
\bq\label{BC2}
\fv(\cdot) : \Omega_1 \to \Omega_2,
\eq
where the sets $\Omega_1, \Omega_2$ are the support of the measures~$\mu_1, \mu_2$, respectively.
These boundary conditions are implicit and thus difficult to implement numerically.  
For many applications,  both domains are squares, and a simplifying assumption that edges are mapped to edges allows {Neumann} boundary conditions to be used.  In other applications, periodic boundary conditions are used.
Building on the work in this article, the first author has recently devised a method to implement the boundary condition~\eqref{BC2}~\cite{FroeseMATransport}.

In other problems, the \MA operator appears in an \emph{inequality constraint} in a variational problem for optimal mappings where the cost is no longer the transportation cost.  Here the operator has the effect of restricting the local area change on the set of admissible mappings; see~\cite{HaberConstrained} or~\cite{CohenOr}.  

\subsection{Related numerical works}
Despite the number of applications, few publications devoted to the numerical solution of the \MA equation have appeared until recently.  We make a distinction between numerical approaches with optimal transportation type boundary conditions~\eqref{BC2} and the standard Dirichlet boundary conditions~\eqref{Dirichlet}.  
In the latter case, a number of numerical methods have recently been proposed for the solution of the \MA equation.

The early work by Benamou and Brenier~\cite{BenBren} used a fluid mechanical approach to compute the solution to the optimal transportation problem.
The work by Oliker and Prussner~\cite{olikerprussner88} presents a discretization that converges to the Aleksandrov solution in two dimensions.  This is a very early method which solves a problem with only about a dozen grid points.  

For the Dirichlet problem treated herein, a series of papers have recently appeared by two groups of authors, Dean and Glowinski~\cite{DGnum2008, DGaug, GlowinksiICIAM} and Feng and Neilan~\cite{FengMA, FengFully}.  The methods introduced by these authors perform best in the regular case and can break down in the singular case.  See~\cite{BeFrObMA} for a more complete discussion.

We also mention the works~\cite{LoeperMA}, in the periodic case, and~\cite{Delzanno} for applications to mappings.  The method of~\cite{MAFrisch} treats the problem of periodic boundary conditions in an odd-dimensional space.

\subsection{Numerical challenges}

When the conditions for regularity are satisfied, classical solutions can be approximated successfully using a range of standard techniques (see, for example, works such as~\cite{DGaug, DGnum2008, GlowinksiICIAM} and~\cite{FengMA, FengFully}).  In an earlier work~\cite{BeFrObMA}, we studied a simple finite difference discretization, which was accurate and fast on smooth solutions.
However, for singular solutions, standard numerical methods break down: either by becoming unstable, poorly conditioned, or by selecting the wrong (non-convex) solution.

\subsubsection*{Weak solutions}
For singular solutions, the appropriate notion of weak (viscosity or Aleksandrov) solution must be used.  
Numerical methods have been developed which capture weak solutions:
Oliker and Prussner, in~\cite{olikerprussner88},  presented a method that converges to the Aleksandrov solution.  
The second author~\cite{ObermanEigenvalues} introduced a wide stencil finite difference method which converges to the viscosity solution. Both of these methods were restricted to two dimensions.    

\subsubsection*{Convexity}
The convexity constraint is necessary for both uniqueness and stability.
In particular, the equation~\eqref{MA} fails to be elliptic if $u$ is non-convex (see~\autoref{sec:ellipt}), so instabilities can arise if the convexity condition~\eqref{convex} is violated.
Any approximation of~\eqref{MA} requires some selection principle to choose the convex solution.  
This selection principle can be built into the discretization, as in~\cite{ObermanEigenvalues}, or built into the solution method, as in~\cite{BeFrObMA}.

\subsubsection*{Accuracy}
The convergent monotone scheme of~\cite{ObermanEigenvalues} uses a wide stencil and the accuracy of the scheme depends on the \emph{directional resolution}, which in turn depends on the width of the stencil.  As we demonstrate below, for highly singular solutions such as~\eqref{eq:cone}, the directional resolution error can dominate.  However, it is unrealistic to expect high accuracy for singular solutions.   Experimental results on singular solutions using standard finite differences, which is formally $\bO(h^2)$, produce results which are only accurate to $\bO(\sqrt h)$; see~\cite{BeFrObMA}.

\subsubsection*{Fast solvers}
Previous work by the authors and a coauthor~\cite{BeFrObMA} investigated fast solvers for~\eqref{MA}.  An explicit method was presented which was moderately fast, independent of the solution time.  
For regular solutions, a faster (by an order of magnitude) semi-implicit solution method was introduced (see~\autoref{sec:SI}),  but this method was slower (by an order of magnitude) on singular solutions.


\section{Analysis and weak solutions}\label{sec:weak}
In this section we review regularity results and background analysis.

Example solutions include~\eqref{eq:c1} and~\eqref{eq:cone}, below.  The first is a viscosity solution, defined in \autoref{sec:viscosity}.  The second is an Aleksandrov solution, defined in \autoref{sec:alex}.   

The regularity conditions required for classical solutions are reviewed in the section below.  The weaker notion of solution, viscosity solutions, allows the continuous function $f$ to be zero.  The even weaker geometric notion of Aleksandrov solutions allows $f$ to be a nonnegative measure.

\subsection{Regularity}\label{sec:regularity}
Regularity results for the \MA equation have been established in~\cite{CafNirSpruck,UrbasDirichletMA,CaffEstimates}. We refer to the book~\cite{Gutierrez} for the following result.

Under the following conditions, the \MA equation is guaranteed to have a unique $C^{2,\alpha}$ solution.
\bq\label{conditions}
\begin{cases}
\text{The domain, $\Omega$, is strictly convex with boundary $\partial\Omega\in C^{2,\alpha}$.}\\
\text{The boundary values, $g \in C^{2,\alpha}(\partial\Omega)$.}\\
\text{The function, $f \in C^\alpha(\Omega)$ is strictly positive.}
\end{cases}
\eq

\begin{remark}  In the extreme case, with $f(x) = 0$ for all $x \in \Omega$, the equation~\eqref{MA},\eqref{convex} reduces to the computation of the convex envelope of the boundary conditions~\cite{ObermanCENumerics, ObSilvConvEnv}.  In this case, solutions may not even be continuous up to the boundary and can also 
be non-differentiable in the interior.
\end{remark}

\begin{remark}
While it is usual to perform numerical solutions on a rectangle, regularity can break down in particular convex polygons~\cite{Villani,Pogorelov}.  
\end{remark}

\subsection{Viscosity solutions}\label{sec:viscosity}
We recall the definition of a viscosity solution~\cite{CIL}, which is defined for the \MA equation in~\cite{Gutierrez}. 
\begin{definition}
Let $u \in C(\Omega)$ be convex and $f\geq0$ be continuous.  The function $u$ is a \emph{viscosity subsolution (supersolution)} of the \MA equation in $\Omega$ if whenever convex $\phi\in C^2(\Omega)$ and $x_0\in\Omega$ are such that $(u-\phi)(x)\leq(\geq)(u-\phi)(x_0)$ for all $x$ in a neighborhood of $x_0$, then we must have
\[ \det(D^2\phi(x_0)) \geq(\leq)f(x_0). \]
The function $u$ is a \emph{viscosity solution} if it is both a viscosity subsolution and supersolution.
\end{definition}

\begin{example}[Viscosity solution]
This example will be computed in sections~\ref{sec:2d}-\ref{sec:3d}.  
Consider~\eqref{MA} in two dimensions, with solution, $u$,  and data, $f$, given by
\[ 
u(\xv) = \frac{1}{2}  \left ((\abs{\xv}-1)^+ \right)^2, 
\qquad
f(\xv) = \left (1-  \frac 1 {\abs{\xv}} \right)^+. 
\]
Here  we use the notation $h^+ = \max(h,0)$.
(In three dimensions, the function $f$ is different; see \autoref{sec:3d}).
The function $u$ is a viscosity solution but not a classical $C^2$ solution of the \MA equation.

We verify the definition of a viscosity solution.  This only needs to be done at points where $\abs{\xo}=1$ (since $u$ is locally $C^2$ away from this circle).  We note that $f$ is equal to zero on this circle.

We begin by checking convex $C^2$ functions $\phi \leq u$ with $\phi(\xo) = u(\xo) = 0$ (that is, $u-\phi$ has a local minimum here).  
Since $\nabla u(\xo) = 0$, we require $\nabla \phi(\xo) = 0$ as well.  Since $u$ is constant in part of any neighborhood of $\xo$, any convex $\phi$ must also be constant in this part of the neighborhood in order to ensure that $u-\phi$ has a local minimum.  This means that $\phi$ has zero curvature in some directions so that $\det D^2 \phi(\xo) = 0$, as required by the definition of the viscosity solution.  We conclude that $u$ is a supersolution of the \MA equation.

We also need to check functions $\phi \geq u$ with $\phi(\xo) = u(\xo) = 0$ (so that $u-\phi$ has a local maximum).  Since $\phi$ is convex, it will automatically satisfy the condition $\det D^2\phi(\xo) \geq 0$ and we conclude that $u$ is a subsolution. 
\end{example}

\subsection{Aleksandrov solutions}\label{sec:alex}
Next we turn our attention to the Aleksandrov solution, which is a more general weak solution than the viscosity solution.
Here $f$ is generally a measure~\cite{Gutierrez}.  We begin by recalling the definition of the normal mapping or subdifferential of a function.

\begin{definition}
The \emph{normal mapping} (\emph{subdifferential}) of a function $u$ is the set-valued function $\partial u$ defined by
\[ 
\partial u(x_0) = \left \{p  \mid u(x) \geq u(x_0) + p\cdot (x-x_0), \text{ for all } x\in\Omega \right \}. 
\]
For a set $V \subset\Omega$, we define $\partial u(V) = \bigcup\limits_{x\in V}\partial u(x)$.
\end{definition}

Now we want to look at a measure generated by the \MA operator.  
\begin{definition}
Given a function $u\in C(\Omega)$, the \emph{\MA measure} associated with $u$ is defined as
\[ \mu(V) = \abs{\partial u(V)}\]
for any set $V \subset\Omega$, where $\abs{B}$ is the measure of the set $B$.
\end{definition}

This measure naturally leads to the notion of the generalized or Aleksandrov solution of the \MA equation.
\begin{definition}
Let $\mu$ be a Borel measure defined in a convex set $\Omega\in\R^d$.  The convex function $u$ is an \emph{Aleksandrov solution} of the \MA equation
\[ \det(D^2u) = \mu \]
if the \MA measure associated with $u$ is equal to the given measure $\mu$.
\end{definition}

\begin{example}[Aleksandrov solution]
As an example, we consider the cone and the the scaled Dirac measure
\[ u(\xv) = \norm{\xv}, 
\qquad
 \mu(V) = \pi \int_V \delta(\xv)\,d\xv. 
 \]
We verify from the definition that $u,\mu$ is an Aleksandrov solution of the \MA equation. (Since $\mu$ is a measure, we cannot interpret $u$ as a viscosity solution of \MA.)  The subdifferential $\partial u$ is given by
\[ 
\partial u(\xv) = 
\begin{cases} 
\xv/ \norm{\xv}, & \norm{\xv}>0,
\\
B_1, & \xv = \mathbf{0},
\end{cases} 
\]
where $B_1 = \left \{\xv \in \R^d \mid \norm{\xv} \leq 1\right\}$.
Then the associated \MA measure will be
\[ \abs{\partial u(V)} = 
\begin{cases} \pi & \mathbf{0}\in V\\ 0 & \mathbf{0} \notin V\end{cases}
= \pi \int_V \delta(x)\,dx = \mu(V).
 \]
\end{example}

\subsection{A PDE for convexity}
The convexity constraint~\eqref{convex} is necessary for uniqueness. 
For example, in two dimensions,  ignoring boundary conditions, in the absence of the convexity constraint, $-u$ will also be a solution of~\eqref{MA} whenever $u$ is a solution.

For a twice continuously differentiable function $u$, the convexity restriction~\eqref{convex} is equivalent to requiring that the Hessian, $D^2u$, be positive definite. 
Since we wish to work with less regular solutions,~\eqref{convex} can be enforced by the equation
\[
\lambda_1(D^2u) \ge 0,
\] 
understood in the viscosity sense~\cite{ObermanCENumerics, ObSilvConvEnv}, 
where $\lambda_1 (D^2u)$ is the smallest eigenvalue of the Hessian of $u$.

The convexity constraint can be absorbed into the operator by
 defining
\bq\label{detplus}
{\det}^+(M) = \prod\limits_{j=1}^d\lambda_j^+ 
\eq
where $M$ is a symmetric matrix, with eigenvalues, $\lambda_1 \le \dots, \le \lambda_n$ and
\[
x^+ = \max(x, 0).
\] 
Using this notation,~\eqref{MA},\eqref{convex} becomes
\bq
\label{MAplus}\tag*{$(MA)^+$}
{\det}^+(D^2u(x))= f(x), \quad \text{for $x$ in }\Omega.
\eq

\begin{remark}
Notice that there is a trade-off in defining~\eqref{detplus}: the constraint~\eqref{convex} is eliminated but the operator becomes non-differentiable near singular matrices.
\end{remark}

\subsection{Linearization and ellipticity}\label{sec:ellipt}
The linearization of the determinant is given by
\[
\grad \det(M) \cdot N = 
\trace 
\left(   
	M_{adj} N	
\right )
\]
where $M_{adj}$ is the adjugate~\cite{Strang}, which is the transpose of the cofactor matrix.  The adjugate matrix is positive definite if and only if $M$ is positive definite.
When the matrix $M$ is invertible, the adjugate $M_{adj}$ satisfies
\bq\label{adjinv}
M_{adj} = \det(M) M^{-1}.
\eq

We now apply these considerations to the linearization of the \MA operator.
When $u \in C^2$ we can linearize this operator as
\bq\label{eq:lin} 
\grad_{M} \det(D^2u) \cdot v   = {\trace}\left( 
(D^2u)_{adj}D^2v
\right ). 
\eq

\begin{example}
In two dimensions we obtain
\[ 
\grad_M \det(D^2u) v = 
u_{xx} v_{yy} + u_{yy} v_{xx} - 2 u_{xy} v_{xy},
\]
which is homogeneous of order one in $D^2u$.
In dimension $d \ge 2$, the linearization is homogeneous order $d-1$ in $D^2u$.
\end{example}

The linear operator
\[
L[u] \equiv \trace{ (A(x) D^2 u )}
\]
is \emph{elliptic} if the coefficient matrix $A(x)$ is positive definite.

\begin{lemma}\label{lem:linell}
Let $u \in C^2$.  The linearization of the \MA operator,~\eqref{eq:lin}, is elliptic if $D^2u$ is positive definite or, equivalently, if $u$ is (strictly) convex.
\end{lemma}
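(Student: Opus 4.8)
The plan is to read the coefficient matrix of the linearized operator directly off of \eqref{eq:lin} and verify that it meets the positive-definiteness requirement for ellipticity. Since $L[v] = \trace\left((D^2u)_{adj}\,D^2v\right)$, the coefficient matrix is $A(x) = (D^2u)_{adj}$, and by the definition of ellipticity recalled immediately before the statement it suffices to show that $(D^2u)_{adj}$ is positive definite whenever $M \equiv D^2u$ is. So the entire lemma reduces to the linear-algebra claim that the adjugate of a symmetric positive definite matrix is itself symmetric positive definite.

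I would prove this claim in one of two equivalent ways. The cleaner route uses the spectral decomposition: write $M = Q\Lambda Q^T$ with $Q$ orthogonal and $\Lambda = \text{diag}(\lambda_1,\dots,\lambda_d)$. Because the adjugate respects orthogonal conjugation, $M_{adj} = Q\,\Lambda_{adj}\,Q^T$, where $\Lambda_{adj}$ is diagonal with entries $\prod_{k\neq j}\lambda_k$. Thus $M_{adj}$ is symmetric with eigenvalues $\prod_{k\neq j}\lambda_k$, each of which is strictly positive when every $\lambda_j > 0$; hence $M_{adj}$ is positive definite. Alternatively, and more in line with the identities already recorded in the excerpt, I would invoke \eqref{adjinv}: a positive definite $M$ is invertible with $\det(M) > 0$, so $M_{adj} = \det(M)\,M^{-1}$ is a positive scalar multiple of the positive definite matrix $M^{-1}$ and therefore positive definite. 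Either argument shows $A(x) = (D^2u)_{adj}$ is positive definite, so $L$ is elliptic.

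To finish the equivalence asserted in the statement, I would recall the standard fact that for $u \in C^2$ strict convexity is precisely the pointwise positive definiteness of the Hessian $D^2u$, so the two hypotheses coincide and the lemma follows. As for the main obstacle, there is essentially no analytic difficulty: the content is linear algebra, and both key facts — that $M_{adj}$ is positive definite exactly when $M$ is, and that $M_{adj} = \det(M)\,M^{-1}$ for invertible $M$ — are already foreshadowed in the remarks preceding the lemma. The only point requiring a moment of care is justifying that the adjugate shares the eigenvectors of the symmetric $M$ (equivalently, that $M_{adj}$ is symmetric), which both the spectral identity and the inverse formula make transparent.
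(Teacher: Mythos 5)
Your proposal is correct and follows essentially the same route as the paper, which states no separate proof but relies on exactly the facts it records just before the lemma: the coefficient matrix of the linearization is $(D^2u)_{adj}$, the adjugate of a symmetric matrix is positive definite if and only if the matrix itself is (equivalently, via $M_{adj} = \det(M)\,M^{-1}$ from \eqref{adjinv}), and positive definiteness of the Hessian is equivalent to strict convexity for $u \in C^2$. Your two linear-algebra justifications of the adjugate fact simply fill in the detail the paper leaves implicit.
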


\begin{remark}
When the function $u$ fails to be strictly convex, the linearization can be degenerate elliptic, which affects the conditioning of the linear system~\eqref{eq:lin}.  When the function $u$ is non-convex, the linear system can be unstable.
\end{remark}

The definition of a nonlinear elliptic PDE operator, which follows, generalizes the definition of a linear elliptic operator.  In addition, it allows for the operator to be non-differentiable. 
\begin{definition}
Let $F(M)$ be a continuous  function defined on symmetric matrices, which we regards as a PDE operator.
Then $F(M)$ is \emph{elliptic} if it satisfies the monotonicity condition
\[ 
F(M) \leq F(N) \text{ whenever } M\leq N, 
\]
where for symmetric matrices $M\le N$ means $x^T M x \le x^T N x$ for all $x$.  
\end{definition}

\begin{example}
The operator ${\det}^+(M)$ is a non-decreasing function of the eigenvalues, so it is elliptic.
\end{example}


\section{Convergent discretization of the \MA equation}\label{sec:discretize}
In this section, we produce a discretization of the \MA equation in two and higher dimensions, and prove that solutions of the scheme converge to the viscosity solution.
This discretization is different from the one in~\cite{ObermanEigenvalues}, which was restricted to two dimensions.

We also prove convergence of Newton's method to the solution of the scheme.
To this end, we modify the original monotone  scheme from a non-differentiable scheme to a regularized (but still monotone) scheme.

\subsection{The standard finite difference discretization}
We begin by discussing the standard finite difference discretization of the \MA equation.   This is obtained by simply discretizing each of the operators using standard finite differences as in, for example,~\cite{BeFrObMA}.

Since this discretization alone does not enforce the convexity condition~\eqref{convex}, it can lead to instabilities.  
In particular, Newton's method can become unstable if this discretization is used.

There is no reason to assume that the standard discretization converges.
In fact, the two dimensional scheme has multiple solutions.  In~\cite{BeFrObMA} this discretization was used, but the solvers were designed to select the convex solution. 
In addition, the solution methods used in~\cite{BeFrObMA} do not generalize to three and higher dimensions: enforcing convexity of the solution is much more difficult when there are more that two possible eigenvalues of the Hessian.

\subsection{Eigenvalues of the Hessian in two dimensions}\label{sec:mon2D}
In two dimensions, the largest and smallest eigenvalues of a symmetric matrix can be represented by the Rayleigh-Ritz formula
\[
\lambda_1(A) = \min_{\norm{\nu} = 1} \nu^T A \nu, 
\qquad 
\lambda_2(A) = \max_{\norm{\nu} = 1} \nu^T A \nu.
\]
This formula was used in~\cite{ObermanEigenvalues} to build a monotone scheme for the~\MA operator, which is the product of the eigenvalues of the Hessian.

In higher dimensions, the formula above does not generalize naturally.  Instead, we use another characterization, which applies to positive definite matrices.

\subsection{A variational characterization of the determinant}
In this section we establish a matrix analysis result that is used to build a monotone discretization of the \MA operator.  
Our result below is closely related to \emph{Hadamard's inequality}, which states that, for a symmetric positive definite matrix $A$,
\[
\det(A) \le \prod_{i=1}^n a_{ii},
\]
with equality when $A$ is diagonal.

Consider an arbitrary symmetric positive definite matrix $A$.   We can characterize the determinant of $A$ as follows.
\begin{lemma}[Variational characterization of the determinant]\label{thm:eigs}
Let $A$ be a $d \times d$ symmetric positive definite matrix with eigenvalues $\lambda_j$ and let $V$ be the set of all  orthonormal bases of $\R^d$:
\[ V = \{(\nu_1,\ldots,\nu_d)  \mid \nu_j\in\R^d,\nu_i\perp\nu_j \text{ if }i\neq j, \|\nu_j\|_2 = 1\}. \]
Then the determinant of $A$ is equivalent to
\[ \prod\limits_{j=1}^d \lambda_j = \min\limits_{(\nu_1,\ldots,\nu_d)\in V} \prod\limits_{j=1}^d \nu_j^T A \nu_j. \]
\end{lemma}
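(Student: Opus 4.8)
The plan is to reduce the identity to Hadamard's inequality, stated above, by recognizing the numbers $\nu_j^T A \nu_j$ as the diagonal entries of $A$ written in the orthonormal basis $(\nu_1,\ldots,\nu_d)$. The key structural fact is that changing to such a basis amounts to an orthogonal conjugation of $A$, which leaves both the determinant and positive definiteness unchanged.

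First I would fix an arbitrary orthonormal basis $(\nu_1,\ldots,\nu_d)\in V$ and form the matrix $O$ whose columns are $\nu_1,\ldots,\nu_d$; by orthonormality $O$ is an orthogonal matrix. Setting $B = O^T A O$, the diagonal entry $b_{jj}$ equals $\nu_j^T A \nu_j$, so that $\prod_{j} \nu_j^T A \nu_j = \prod_j b_{jj}$. Since $O$ is orthogonal we have $\det B = (\det O)^2 \det A = \det A = \prod_j \lambda_j$, and $B$ is symmetric and positive definite because $x^T B x = (Ox)^T A (Ox) > 0$ for $x \neq 0$.

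Next I would apply Hadamard's inequality to the symmetric positive definite matrix $B$, which gives $\det B \le \prod_j b_{jj}$, that is $\prod_j \lambda_j \le \prod_j \nu_j^T A \nu_j$. As the orthonormal basis was arbitrary, this shows that $\prod_j \lambda_j$ is a lower bound for the quantity being minimized over $V$.

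Finally, to confirm that this lower bound is attained --- so that the infimum is genuinely a minimum equal to $\det A$ --- I would take $(\nu_1,\ldots,\nu_d)$ to be an orthonormal basis of eigenvectors of $A$. Then $B = O^T A O$ is diagonal with the eigenvalues $\lambda_j$ on its diagonal, whence $\prod_j \nu_j^T A \nu_j = \prod_j \lambda_j$; this is precisely the equality case of Hadamard's inequality. Combining the two bounds yields the asserted identity. I expect no serious obstacle here, since Hadamard's inequality is assumed at the outset; the only point needing care is the elementary bookkeeping that orthogonal conjugation sends $\nu_j^T A \nu_j$ to the $(j,j)$ entry of $B$ and preserves $\det A$.
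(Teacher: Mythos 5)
Your proof is correct, but it takes a genuinely different route from the paper's. You reduce the identity to Hadamard's inequality: writing $B = O^T A O$, where $O$ is the orthogonal matrix whose columns are the $\nu_j$, you observe that the numbers $\nu_j^T A \nu_j$ are exactly the diagonal entries of $B$, that $\det B = \det A = \prod_j \lambda_j$, and that $B$ inherits symmetry and positive definiteness; Hadamard's inequality applied to $B$ then gives the lower bound, and an orthonormal eigenbasis (which makes $B$ diagonal) gives attainment. The paper instead proves the result from scratch: it expands each $\nu_j$ in an orthonormal eigenbasis $v_k$ of $A$, notes that the matrix of squared coefficients $c_{jk}^2 = (\nu_j^T v_k)^2$ is doubly stochastic (each row and each column sums to one), and applies Jensen's inequality to the concave function $\log$ to obtain $\sum_j \log\bigl(\sum_k c_{jk}^2 \lambda_k\bigr) \geq \sum_j \sum_k c_{jk}^2 \log \lambda_k = \sum_k \log \lambda_k$. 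The two arguments are essentially equivalent in mathematical content: the paper's Jensen computation is itself a standard proof of Hadamard's inequality (specialize $\nu_j$ to the standard basis to recover it), and your conjugation step is the standard bridge between the basis-dependent statement (Hadamard) and the basis-free one (the lemma). What your approach buys is brevity and conceptual transparency --- the lemma is revealed as nothing but Hadamard's inequality read in every orthonormal frame at once --- at the cost of treating Hadamard's inequality, which the paper cites only as ``closely related'' motivation rather than as an ingredient, as a black box. What the paper's approach buys is a self-contained argument resting on nothing beyond Jensen's inequality, which is arguably preferable in a paper whose readership is numerical analysts rather than matrix theorists.
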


\begin{proof}

Since $A$ is symmetric and positive definite, we can find a set of $d$ orthonormal eigenvectors $v_j$.

Any $(\nu_1,\ldots,\nu_d)\in V$, can be expressed as a linear combination of the eigenvectors:
\[ \nu_j = \sum\limits_{k=1}^d c_{jk}v_k = \sum\limits_{k=1}^d (\nu_j^Tv_k) v_k. \]
Since the $\nu_j$ and $v_j$ are both orthonormal, we can make some claims about the coefficients $c_{jk}$.
\begin{align*}
\sum\limits_{k=1}^d c_{jk}^2
  = \left(\sum\limits_{k=1}^d c_{jk}v_k^T\right)\left(\sum\limits_{l=1}^d c_{jl}v_l\right) = \nu_j^T\nu_j = 1,
\end{align*}
\[ \sum\limits_{j=1}^d c_{jk}^2
   = v_k^T\left(\sum\limits_{j=1}^d\nu_j\nu_j^T\right)v_k
   = v_k^Tv_k
   = 1.
\]
We can use these results to compute
\begin{align*}
\log \prod\limits_{j=1}^d \nu_j^TA\nu_j
  &= \sum\limits_{j=1}^d \log  \left(\nu_j^T A\nu_j \right)
  = \sum\limits_{j=1}^d \log\left(\sum\limits_{k=1}^d c_{jk}^2\lambda_k\right).
\end{align*}
Using Jensen's inequality, we conclude that
\begin{align*}
\log \prod\limits_{j=1}^d \nu_j^TA\nu_j
  &\geq \sum\limits_{j=1}^d\sum\limits_{k=1}^d c_{jk}^2 \log(\lambda_k)\\
  &= \sum\limits_{k=1}^d \left(\sum\limits_{j=1}^d c_{jk}^2\right) \log(\lambda_k)
  = \log\prod\limits_{j=1}^d \lambda_j.
\end{align*}
Since the logarithmic function is increasing, we conclude that
\[ \prod\limits_{j=1}^d \nu_j^TA\nu_j \geq  \prod\limits_{j=1}^d \lambda_j\]
with equality if the $\nu_j$ are identical to the eigenvectors $v_j$.
Thus we conclude
\[ \prod\limits_{j=1}^d \lambda_j =  \min\limits_{(\nu_1,\ldots,\nu_d)\in V} \prod\limits_{j=1}^d \nu_j^T A \nu_j.\qedhere\]
\end{proof}

We use Lemma~\ref{thm:eigs} to characterize the determinant of the Hessian of a convex $C^2$ function $\phi$.  
We can express this in terms of second directional derivatives of $\phi$ as follows:
\[ 
\det(D^2\phi) =  \min\limits_{(\nu_1,\ldots,\nu_d)\in V} \prod\limits_{j=1}^d \nu_j^T D^2\phi \, \nu_j
  = \min\limits_{(\nu_1,\ldots,\nu_d)\in V} \prod\limits_{j=1}^d \DD{\phi}{\nu_j}.
\]
The convexified \MA operator~\ref{MAplus} is represented by
\[ 
{\det}^+(D^2 \phi) =  \min\limits_{\{\nu_1\ldots\nu_d\}\in V}
\prod\limits_{j=1}^{d} 
\left (\frac{\partial^2 \phi}{\partial\nu_j^2}\right )^+
.
\]

\begin{remark}
This characterization of the \MA operator remains valid even when $\phi$ is not strictly convex, in which case the value is zero.
\end{remark}

\subsection{Wide stencil schemes}
Wide stencil schemes are needed to build consistent, monotone discretizations of degenerate second order PDEs ~\cite{Zidani, ObermanMC,ObermanIL}.
Wide stencil schemes were built for the two dimensional \MA equation in~\cite{ObermanEigenvalues}.   A wide stencil discretization of the convex envelope was given in~\cite{ObermanCENumerics}. 

To discretize the \MA operator on a finite difference grid, we approximate the second derivatives by centered finite differences; this is the \emph{spatial} discretization, with parameter $h$.
In addition, we consider a finite number of possible directions $\nu$ that lie on the grid; this is the \emph{directional} discretization, with parameter $d\theta$.
We denote this set of orthogonal vectors by $\G$.  Then we can discretize the convexified \MA operator as
\bq\label{MAmon}\tag*{$(MA)^{M}$}
\mao^{M}[u] \equiv \min\limits_{\{\nu_1\ldots\nu_d\}\in \G}
\prod\limits_{j=1}^{d} \left (\Dt_{\nu_j\nu_j}u\right)^+
\eq
where $\Dt_{\nu\nu}$ is the finite difference operator for the second directional derivative in the direction $\nu$, which lies on the finite difference grid.  
These directional derivatives are discretized by simply using finite differences on the grid:
\[ \Dt_{\nu\nu}u_i = \frac{1}{\abs{\nu}^2h^2}\left(u(x_i + \nu h) + u(x_i - \nu h) - 2u(x_i)\right). \]
Depending on the direction of the vector $\nu$, this may involve a wide stencil.  At points near the boundary of the domain, some values required by the wide stencil will not be available~(\autoref{fig:stencil}).  In these cases, we use interpolation at the boundary to construct a (lower accuracy) stencil for the second directional derivative; see~\cite{ObermanEigenvalues} for more details.

\begin{figure}
  \centering
  \subfigure[In the interior.]{
  \includegraphics[height=0.4\textwidth]
  {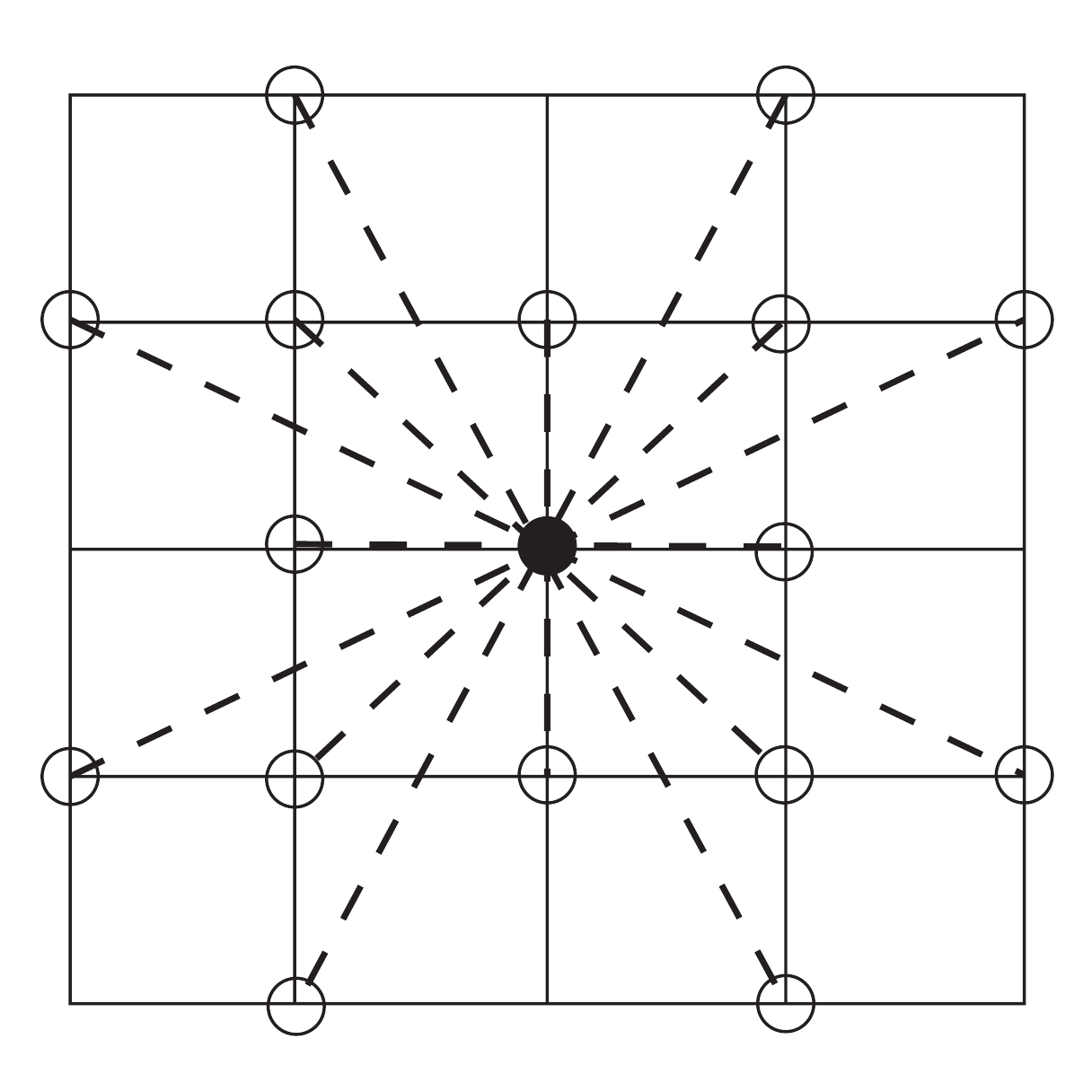}
  }                
  \subfigure[Near the boundary.]{
  \includegraphics[height=0.4\textwidth]
  {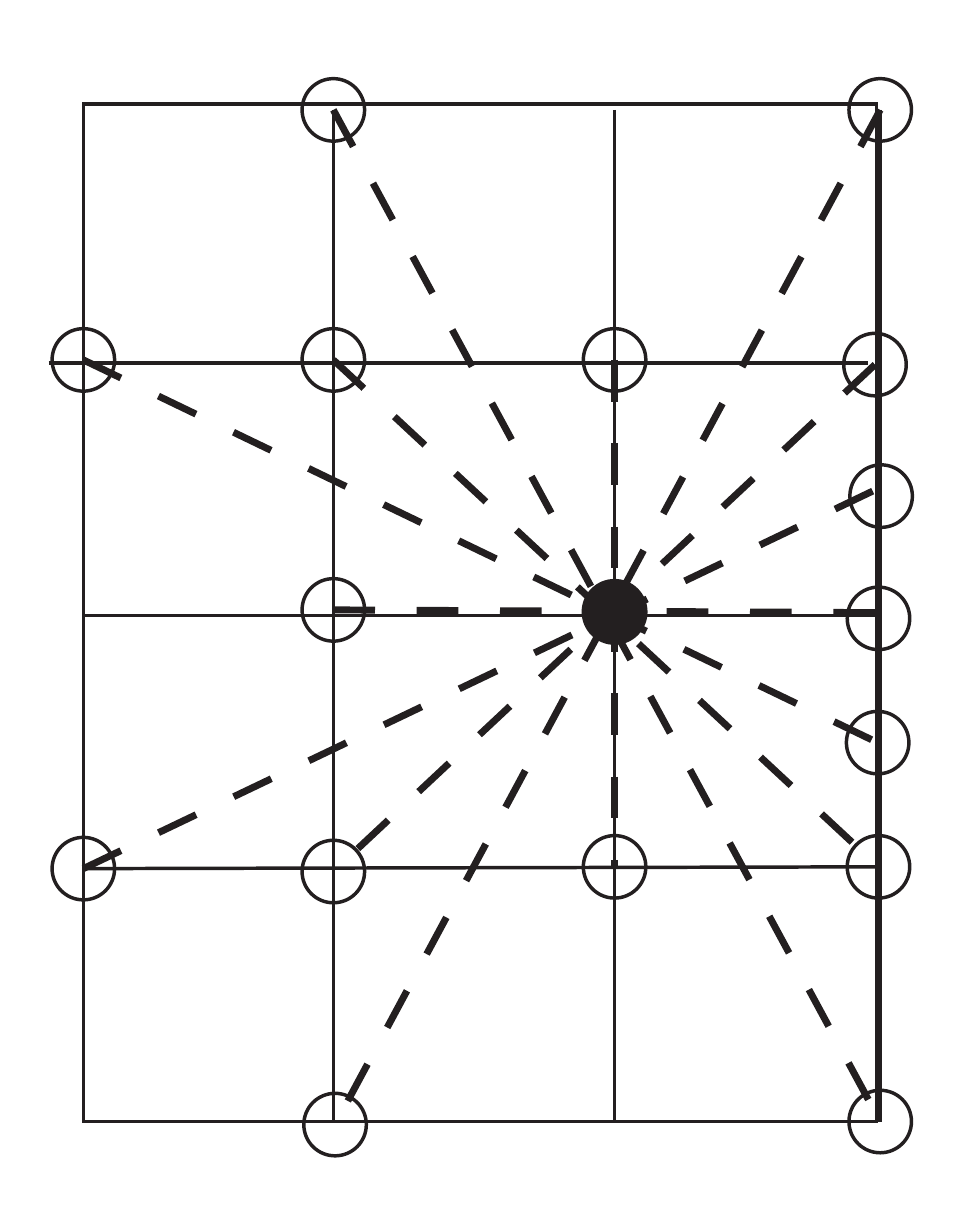}
  }
\caption{
Wide stencils on a two-dimensional grid.
}
\label{fig:stencil}
\end{figure}

Since the discretization considers only a finite number of directions $\nu$, there will be an additional term in the consistency error coming from the directional resolution $d\theta$ of the stencil.  This angular resolution will decrease and approach zero as the stencil width is increased.
In practice, we use fixed relatively narrow stencils for most computations.  However, for singular solutions the directional resolution error can dominate.

We also note that the discretization we have just described is genuinely distinct from the two-dimensional wide stencil discretization described in~\cite{ObermanEigenvalues}.  For example, we can consider the function $u(x,y) = x^2+y^2+x^2y^2$  and discretize the \MA operator using a 9-point stencil.  This allows us to choose from the set of directions
\[ \{(1,0),\,(0,1),\,(1,1),\,(1,-1)\}. \]
Using the two-dimensional characterization of the \MA equation (recalled in \autoref{sec:mon2D}), the monotone discretization produces
\[ \left(\min\limits_{\nu_1} \Dt_{\nu_1\nu_1}u\right)\left(\max\limits_{\nu_2} \Dt_{\nu_2\nu_2}u\right) = 2(2+h^2).\]
On the other hand, our new discretization has the value
\[ \min\limits_{\nu_1\perp\nu_2}\left\{\Dt_{\nu_1\nu_1}u\,\Dt_{\nu_2\nu_2}u\right\} = 4. \]

\subsection{Regularized monotone discretization}\label{sec:regularized}
The monotone discretization we have just described may not be differentiable at points where the minimum is attained along more than one direction $\nu$, or at points where the value is zero.  Since we need to differentiate the operator when we build fast solvers using Newton's method in \autoref{sec:newton}, we wish to regularize this discretization.
To ensure convergence to the viscosity solution, we need to make the regularization monotone.

One way to do this is to notice that the non-differentiability of~\ref{MAmon} 
arises only from the operations of $\max$ and $\min$.  Thus if we regularize each of these operations in a monotone way, we can reconstruct a regularized version of~\ref{MAmon} by substitution.

With that in mind, we define
\begin{align*} 
{\max}^\delta(a,b) &= \frac{1}{2}\left(a+b+\sqrt{(a-b)^2+\delta^2}\right),
\\
{\min}^\delta(a,b) &= \frac{1}{2}\left(a+b-\sqrt{(a-b)^2+\delta^2}\right). 
\end{align*}
Clearly ${\max}^\delta \to \max$ and ${\min}^\delta \to \min$ as $\delta\to 0$.
Moreover, these functions are  differentiable and  non-decreasing in each variable.

Now we can build up the regularized operator as follows.
Define
\[
\left (\Dt_{\nu\nu}u\right)^{+,\delta} = {\max}^\delta \left (\Dt_{\nu\nu}u, 0 \right ) 
\]
and replace each term in the product in~\ref{MAmon} with its regularized version as above.
Next, the minimum of the product in~\ref{MAmon} is regarded as a succession of minimums, each of which is replaced by its regularized version.

%

The resulting discretization is denoted
\bq\label{MAreg}\tag*{$(MA)^\delta$}
\mao^\delta[u]. 
\eq
It is a smooth function of $u_i$, strictly increasing in each of the $\Dt_{\nu_j^k\nu_j^k}u_i$, and converges to the original discretization~\ref{MAmon} as $\delta \to 0$.

\subsection{Convergence of finite difference schemes}
\label{sec:parabolic}
In order to prove convergence of the solutions of our finite difference schemes to the unique viscosity solution of~\eqref{MA}, we will rely on a framework developed by Barles and Souganidis~\cite{BSnum} and extended in~\cite{ObermanSINUM}.

The framework of~\cite{BSnum} gives easily verified conditions for when approximation schemes converge to the unique viscosity solution of a PDE.  

\begin{theorem}[Convergence of Approximation Schemes]
Consider a degenerate elliptic equation, for which there exist unique viscosity solutions.  A consistent, stable approximation scheme converges uniformly on compact subsets to the viscosity solution, provided it is monotone.
\end{theorem}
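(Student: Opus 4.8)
The plan is to follow the classical argument of Barles and Souganidis, which is built on the notion of \emph{half-relaxed limits}. Let $u^h$ denote the family of solutions of the approximation scheme, indexed by the discretization parameter $h$. Stability provides a uniform bound on this family, so that the upper and lower relaxed limits
\[
\overline{u}(x) = \limsup_{h \to 0,\, y \to x} u^h(y),
\qquad
\underline{u}(x) = \liminf_{h \to 0,\, y \to x} u^h(y)
\]
are well defined and finite. By construction $\overline{u}$ is upper semicontinuous, $\underline{u}$ is lower semicontinuous, and $\underline{u} \le \overline{u}$ everywhere. The goal is to show that these two functions coincide and that their common value is the unique viscosity solution.

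The heart of the argument is to prove that $\overline{u}$ is a viscosity subsolution and $\underline{u}$ is a viscosity supersolution of the PDE. I would carry this out for $\overline{u}$, the supersolution claim being symmetric. Let $\phi$ be a smooth test function such that $\overline{u} - \phi$ has a strict local maximum at a point $x_0$. Using the definition of the relaxed limit, I would extract a sequence of grid points $x_h \to x_0$ at which $u^h - \phi$ attains a local maximum and along which $u^h(x_h) \to \overline{u}(x_0)$. At such a point, monotonicity of the scheme allows me to replace the grid values of $u^h$ by those of $\phi$, shifted by the constant $u^h(x_h) - \phi(x_h)$, while controlling the sign of the resulting change in the scheme. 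Substituting into the scheme and invoking consistency, I would pass to the limit $h \to 0$ to obtain the subsolution inequality for $\phi$ at $x_0$.

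Once $\overline{u}$ and $\underline{u}$ are known to be a subsolution and a supersolution respectively, I would invoke the comparison principle, which is the property underlying the assumed uniqueness of viscosity solutions for the equation. Comparison yields $\overline{u} \le \underline{u}$, and combined with the trivial inequality $\underline{u} \le \overline{u}$ this forces $\overline{u} = \underline{u} =: u$. A function that is simultaneously upper and lower semicontinuous is continuous, so $u$ is the unique viscosity solution, and the equality of the two relaxed limits is precisely the statement that $u^h \to u$ uniformly on compact subsets.

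I expect the main obstacle to be the sub/supersolution step. The delicate point is the simultaneous use of monotonicity, to reduce the scheme applied to $u^h$ to the scheme applied to the smooth test function with a controllable error sign, and of consistency, to identify the limit of the scheme with the PDE operator. Producing the maximizing grid points and justifying the limit passage rely on the standard perturbation trick of working with a \emph{strict} local maximum, which guarantees that the discrete maximizers remain near $x_0$.
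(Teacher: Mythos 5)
Your proposal is correct and is precisely the classical Barles--Souganidis half-relaxed-limits argument; the paper itself gives no proof of this theorem, recalling it as a known result from the cited reference \cite{BSnum}, so your argument coincides with the proof of the source the paper relies on. The only point worth flagging is that the hypothesis ``unique viscosity solutions'' must be read, as you correctly do, as a strong comparison principle between semicontinuous sub- and supersolutions --- that is what the step $\overline{u} \le \underline{u}$ genuinely requires, and it is the assumption under which Barles and Souganidis state the result.
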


While the previous theorem gives conditions for convergence, it does not provide a method for building monotone schemes or verifying when the schemes are monotone.
In~\cite{ObermanSINUM}, a framework for building and verifying the monotonicity of finite difference schemes was established. 

We recall that a finite difference equation has the form
\[ F^i[u] = F^i(u_i,u_i-u_j|_{j \neq i}). \]
This allows us to characterize a degenerate elliptic (monotone) scheme as follows~\cite{ObermanSINUM}.

\begin{definition}
The scheme $F$ is \emph{degenerate elliptic} if $F^i$ is non-decreasing in each variable.
\end{definition}

We recall Theorem~3 from~\cite{ObermanSINUM}, which provides a simple way of verifying both monotonicity and stability.

\begin{theorem}[Verifying Monotonicity and Stability]
A scheme is monotone and non-expansive in the $\ell^\infty$ norm if and only if it is degenerate elliptic.
\end{theorem}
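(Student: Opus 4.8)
The plan is to work with the forward-Euler iteration map associated with the scheme, namely the map $\Mp[u] = u - \rho F[u]$ (whose fixed points are exactly the solutions of $F[u]=0$); I read ``monotone'' as order-preservation of $\Mp$ (if $u\le v$ componentwise then $\Mp[u]\le\Mp[v]$) and ``non-expansive in $\ell^\infty$'' as $\|\Mp[u]-\Mp[v]\|_\infty \le \|u-v\|_\infty$, both for $\rho>0$ below a CFL/Lipschitz threshold. It is convenient to regard each $F^i$ as a function of the independent arguments $r := u_i$ and the differences $p_j := u_i - u_j$, so that degenerate ellipticity reads ``$F^i$ is nondecreasing in $r$ and in each $p_j$.'' The three elementary perturbations I will test against are: bumping a single neighbour value $u_k$ with $k\ne i$, which changes only $p_k$; bumping $u_i$ alone, which changes $r$ and all $p_j$ at once; and adding a constant $\e\mathbf 1$ to every value, which changes only $r$.

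For the forward direction (degenerate elliptic $\Rightarrow$ monotone and non-expansive), monotonicity of $\Mp$ is checked coordinatewise: the sensitivity to a neighbour is $\partial_{u_k}\Mp[u]_i = \rho\,\partial_{p_k}F^i \ge 0$ with no constraint on $\rho$, while $\partial_{u_i}\Mp[u]_i = 1-\rho\,\partial_r F^i-\rho\sum_j \partial_{p_j}F^i$ is nonnegative precisely under the CFL bound on $\rho$ (for non-smooth $F$ I would phrase these as monotonicity inequalities rather than derivatives). Non-expansiveness then follows from a discrete-comparison (Crandall--Tartar type) argument: setting $M=\max_i(u_i-v_i)$ one has $u \le v+M\mathbf 1$, so monotonicity gives $\Mp[u]\le \Mp[v+M\mathbf 1]$, and because $F^i$ is nondecreasing in its first argument $r$ one checks $\Mp[v+M\mathbf 1]\le \Mp[v]+M\mathbf 1$; together these give $\Mp[u]_i-\Mp[v]_i\le M$ for every $i$, and the symmetric argument with $m=\min_i(u_i-v_i)$ supplies the lower bound, whence $\|\Mp[u]-\Mp[v]\|_\infty\le\|u-v\|_\infty$.

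For the converse (monotone and non-expansive $\Rightarrow$ degenerate elliptic) the key observation is that the two hypotheses recover the two halves of the ellipticity condition. Taking $v=u+\e e_k$ with $k\ne i$ and $\e>0$ gives $u\le v$, so monotonicity forces $\Mp[v]_i\ge \Mp[u]_i$; since $v_i=u_i$ this collapses to $F^i[v]\le F^i[u]$, i.e. $F^i$ is nondecreasing in $p_k=u_i-u_k$, which is the off-diagonal (Barles--Souganidis) monotonicity. To obtain the diagonal condition I take $v=u+\e\mathbf 1$, so that $\|v-u\|_\infty=\e$ and only the first argument is perturbed; non-expansiveness gives $\Mp[v]_i-\Mp[u]_i\le \e$, and substituting $\Mp[v]_i-\Mp[u]_i=\e-\rho\big(F^i(u_i+\e,\cdot)-F^i(u_i,\cdot)\big)$ yields $F^i(u_i+\e,\cdot)\ge F^i(u_i,\cdot)$, i.e. $F^i$ is nondecreasing in $r=u_i$.

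I expect the main obstacle to be bookkeeping rather than depth: correctly isolating the roles of the two hypotheses (monotonicity delivers the off-diagonal inequalities, non-expansiveness delivers the zeroth-order/properness inequality) and making the comparison argument robust to the non-differentiability of $F$ by stating everything through monotonicity inequalities and a Lipschitz CFL bound $\rho\,L\le 1$, rather than through derivatives. The delicate line is the verification of $\Mp[v+M\mathbf 1]\le \Mp[v]+M\mathbf 1$, which is exactly where nondecreasing dependence on the first argument enters, and which explains why, absent the zeroth-order condition, one would recover only off-diagonal monotonicity and not the full degenerate ellipticity.
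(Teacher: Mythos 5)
The paper itself contains no proof of this statement: it is recalled verbatim as Theorem~3 of the cited reference \cite{ObermanSINUM}, so the only meaningful comparison is with the argument given there, which yours essentially reproduces. Your route is the standard (and correct) one: read both properties through the Euler map $\Mp[u]=u-\rho F[u]$, verify monotonicity of $\Mp$ coordinatewise (off-diagonal terms free of any restriction on $\rho$, the diagonal term under the CFL/Lipschitz bound), deduce nonexpansiveness by the Crandall--Tartar comparison with constants, and in the converse direction recover the off-diagonal inequalities from the perturbation $u+\e e_k$ and the zeroth-order inequality from $u+\e\mathbf{1}$; the division of labor you point out (monotonicity $\leftrightarrow$ off-diagonal, nonexpansiveness $\leftrightarrow$ dependence on $u_i$) is exactly the content of the equivalence. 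One small bookkeeping repair: the step $\Mp[v+M\mathbf{1}]\le \Mp[v]+M\mathbf{1}$ uses that $F^i$ is nondecreasing in its first argument \emph{together with} $M\ge 0$, and fails as written when $M=\max_i(u_i-v_i)<0$; replace $M$ by its positive part $\max(M,0)$ (when $M<0$ one has $u\le v$, and monotonicity alone gives $\Mp[u]\le\Mp[v]$, so the upper bound is trivial). With that adjustment, and with the inequalities phrased without derivatives as you indicate, the proof is complete.
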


The notion of degenerate ellipticity is also enough to guarantee the existence of a unique solution to a scheme, as proved in Theorem~8 of ~\cite{ObermanSINUM}.

\begin{theorem}[Existence and Uniqueness of Solutions for elliptic schemes]\label{thm:euschemes}
A proper, locally Lipschitz continuous degenerate elliptic scheme has a unique solution which is stable in the $\ell^\infty$ norm.
\end{theorem}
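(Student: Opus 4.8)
The plan is to recast the problem of solving $F^i[u]=0$ as a fixed-point problem for the explicit Euler map, and to extract existence, uniqueness, and stability from a single engine: a strict comparison principle. Throughout I use that properness supplies a uniform strict monotonicity of $F^i$ in its zeroth-order argument, i.e. there is $\gamma>0$ with $F^i(u_i+c,\,u_i-u_j|_{j\ne i})\ge F^i(u_i,\,u_i-u_j|_{j\ne i})+\gamma c$ for every $c\ge 0$, while degenerate ellipticity gives monotonicity in all of the arguments.

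First I would establish the comparison principle: if $F^i[u]\le F^i[w]$ for all $i$, then $u\le w$. Suppose not, and let $M=\max_i(u_i-w_i)>0$ be attained at $i^\ast$. Then $u_{i^\ast}=w_{i^\ast}+M$, while each difference satisfies $u_{i^\ast}-u_j\ge w_{i^\ast}-w_j$ (because $u_j-w_j\le M$). Feeding these inequalities into $F^{i^\ast}$ and using degenerate ellipticity in the difference slots together with strict monotonicity (properness) in the first slot yields $F^{i^\ast}[u]\ge F^{i^\ast}[w]+\gamma M> F^{i^\ast}[w]$, a contradiction. Uniqueness is then immediate. The same argument applied to constant barriers gives stability: since $F^i(R,\mathbf{0})\ge F^i[\mathbf{0}]+\gamma R$ and $F^i(-R,\mathbf{0})\le F^i[\mathbf{0}]-\gamma R$, the constants $\pm R$ are super- and sub-solutions once $R\ge\|F[\mathbf{0}]\|_\infty/\gamma$, and comparison forces $\|u\|_\infty\le R$, a bound independent of the grid.

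For existence I would introduce the Euler map $S_\rho(u)_i=u_i-\rho F^i[u]$, whose fixed points are exactly the solutions of the scheme. The a priori bound from the previous step lets me localize: on the order interval $[-R,R]$ the local Lipschitz hypothesis furnishes a uniform Lipschitz constant $K$, and for $0<\rho\le 1/K$ a check of the discrete difference quotients (nonnegative off-diagonal response by degenerate ellipticity, nonnegative diagonal response $1-\rho\,\partial_{u_i}F^i$ by the choice of $\rho$) shows $S_\rho$ is order-preserving and maps $[-R,R]$ into itself. Properness upgrades this to the strict contraction estimate $\|S_\rho(u)-S_\rho(v)\|_\infty\le(1-\rho\gamma)\|u-v\|_\infty$, obtained by sandwiching $v-\|u-v\|_\infty\le u\le v+\|u-v\|_\infty$ and combining monotonicity with the constant-shift inequality $S_\rho(u+c)\le S_\rho(u)+(1-\rho\gamma)c$. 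The Banach fixed-point theorem on the complete set $[-R,R]$ then produces the (necessarily unique) solution; alternatively, monotone iteration downward from the supersolution $R\mathbf{1}$ converges to it.

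The main obstacle is the interplay between the \emph{local} (rather than global) Lipschitz hypothesis and the contraction constant: one cannot fix $\rho$ uniformly until the solution is known to live in a bounded set, so the a priori $\ell^\infty$ bound delivered by comparison must be secured first and used to pin down $K$ and hence $\rho$. Verifying that $S_\rho$ is genuinely a self-map of the closed order interval, so that Banach's theorem applies on a complete metric space, is the step demanding the most care, and it is precisely here that properness through the strict barriers, rather than bare degenerate ellipticity, is indispensable.
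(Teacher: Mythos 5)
Your proposal is correct. Note that the present paper gives no proof of this statement at all: it is recalled verbatim as Theorem~8 of \cite{ObermanSINUM}, so the only meaningful comparison is with that reference rather than with anything in this text. Your argument is essentially the one given there: solutions are characterized as fixed points of the explicit Euler map $S_\rho(u) = u - \rho F[u]$; degenerate ellipticity plus a CFL-type restriction $\rho \le 1/K$ makes $S_\rho$ order-preserving and nonexpansive in $\ell^\infty$; properness upgrades this to a strict contraction with factor $1-\rho\gamma$ via the constant-shift inequality; Banach's fixed point theorem then yields existence and uniqueness, and comparison with the constant barriers $\pm R$, $R = \|F[\mathbf{0}]\|_\infty/\gamma$, yields the $\ell^\infty$ bound. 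The one place where you go beyond a routine reconstruction is in the handling of the \emph{local} Lipschitz hypothesis: proving the discrete comparison principle first, extracting uniqueness and the a priori bound from it, and only then fixing a uniform Lipschitz constant $K$ on the order interval $[-R,R]$ so that $\rho$ can be chosen and $S_\rho$ verified to be a self-map of a complete set. That ordering is exactly what makes the contraction argument legitimate under the stated hypotheses, and you execute it correctly.
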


Given these general results,  the work in proving that a discretization of~\eqref{MA} converges is reduced to verifying 
two conditions: consistency and degenerate ellipticity.

\begin{remark}[Convergence rates]
While the formal accuracy of the scheme can be determined by Taylor series applied to smooth test functions, the convergence theorem guarantees only uniform convergence.  This is to be expected since, in general, viscosity solutions can be singular.  The power of the convergence result is that it applies even in the singular case.   
In general, the observed accuracy depends on both the regularity of the solution and the choice of discretization, with observed values going from $\bO(h^2)$ (for $C^4$ solutions) to $\bO(\sqrt{h})$; see~\cite{BeFrObMA}.
\end{remark}

\subsection{Proof of convergence}\label{sec:monotoneconverge}

\begin{theorem}[Convergence to Viscosity Solution]\label{thm:viscosity}
Let the PDE~\eqref{MA} have a unique viscosity solution.
Solutions of the schemes~\ref{MAmon}, \ref{MAreg} 
converge to the viscosity solution of~\eqref{MA} as $h,d\theta,\delta\to0$.
\end{theorem}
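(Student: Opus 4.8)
The plan is to invoke the Barles--Souganidis framework (Theorem on Convergence of Approximation Schemes, quoted above) together with the monotonicity-verification machinery from~\cite{ObermanSINUM}. Since the PDE~\eqref{MA} is assumed to have a unique viscosity solution, it suffices to check three things for each scheme~\ref{MAmon} and~\ref{MAreg}: consistency, monotonicity (degenerate ellipticity), and stability. As noted in the excerpt, stability and monotonicity are equivalent to degenerate ellipticity by the Verifying Monotonicity and Stability theorem, so the real work reduces to two items: degenerate ellipticity and consistency.

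\textbf{Degenerate ellipticity.} First I would verify that both schemes are degenerate elliptic in the sense that $F^i$ is non-decreasing in each difference $u_i - u_j$. For~\ref{MAmon}, observe that each directional second difference $\Dt_{\nu\nu}u_i$ is a non-decreasing function of the neighbouring differences $u(x_i)-u(x_i\pm\nu h)$ entering it (the off-diagonal coefficients have the correct sign). The positive part $(\,\cdot\,)^+$, the product over positive quantities, and the outer minimum over directions in $\G$ are each monotone non-decreasing operations, so their composition preserves degenerate ellipticity. For~\ref{MAreg}, the same structural argument applies verbatim once we recall from~\autoref{sec:regularized} that ${\max}^\delta$ and ${\min}^\delta$ are differentiable and non-decreasing in each argument; thus replacing $\max,\min$ by their regularizations leaves monotonicity intact. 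This also yields stability for free and, via~\autoref{thm:euschemes}, existence and uniqueness of the discrete solution.

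\textbf{Consistency.} Next I would establish consistency by Taylor expansion against smooth convex test functions. For a $C^2$ (indeed $C^4$) convex $\phi$ and a fixed direction $\nu$ on the grid, $\Dt_{\nu\nu}\phi(x_i) = \partial^2\phi/\partial\nu^2 + O(h^2)$. By the variational characterization of the determinant (\autoref{thm:eigs}) applied to the Hessian, $\min_{(\nu_1,\dots,\nu_d)\in V}\prod_j \partial^2\phi/\partial\nu_j^2 = \det(D^2\phi)$, so in the limit of vanishing spatial and directional discretization the scheme recovers ${\det}^+(D^2\phi)=f$. The subtlety here is that $\G$ contains only finitely many directions, so the discrete minimum is taken over a strict subset of all orthonormal bases; this introduces the directional-resolution error controlled by $d\theta$, which tends to zero as the stencil widens. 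For~\ref{MAreg}, consistency follows because $\mao^\delta[\phi]\to\mao^M[\phi]$ as $\delta\to0$ uniformly, as recorded at the end of~\autoref{sec:regularized}. Handling the three limits $h,d\theta,\delta\to0$ jointly in the consistency estimate is the point requiring care.

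\textbf{Main obstacle.} I expect the principal difficulty to be the consistency verification at points where the solution fails to be strictly convex or where the operator is non-differentiable --- precisely the singular behaviour the $(\,\cdot\,)^+$ truncation was introduced to handle. The Barles--Souganidis theorem demands consistency phrased via $\limsup$/$\liminf$ over test functions touching from above and below, and one must confirm that truncating negative directional curvatures to zero produces the correct sub- and super-solution inequalities in the viscosity sense, especially where some $\nu_j^T D^2\phi\,\nu_j$ vanishes. Once the monotone limit~\ref{MAmon} is shown consistent in this generalized sense, the regularized scheme~\ref{MAreg} inherits convergence by the uniform approximation $\mao^\delta\to\mao^M$, completing the proof for both schemes simultaneously.
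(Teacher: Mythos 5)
Your proposal follows essentially the same route as the paper: convergence is reduced via the Barles--Souganidis framework to degenerate ellipticity and consistency, which the paper establishes in Lemmas~\ref{lem:deg} and~\ref{lem:consistency} by exactly the arguments you sketch (monotone composition of $\min$, $\max$, and their regularized versions for ellipticity; Taylor expansion plus the variational characterization of \autoref{thm:eigs} with an $\bO(d\theta)$ directional error and uniform convergence as $\delta\to0$ for consistency). The subtlety you flag about sub/supersolution consistency against non-convex test functions is genuine, but the paper leaves it equally implicit, stating its consistency lemma only for convex $\phi$.
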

\begin{proof}
The convergence follows from verifying consistency and degenerate ellipticity as in~\cite{ObermanEigenvalues}.   This is accomplished in Lemmas~\ref{lem:deg}-\ref{lem:consistency}.
\end{proof}

Stability in $\ell^\infty$ for solutions of the schemes follows from the fact that the schemes are degenerate elliptic and the results of  Theorem~\ref{thm:euschemes}.

\begin{lemma}[Degenerate Ellipticity]\label{lem:deg}
The finite difference schemes given by~\ref{MAmon} and~\ref{MAreg} are degenerate elliptic.
\end{lemma}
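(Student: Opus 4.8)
The plan is to exploit the compositional structure of the scheme rather than to compute any partial derivatives directly. The operator $\mao^{M}[u]$ in~\ref{MAmon} is assembled from the elementary second-difference operators $\Dt_{\nu_j\nu_j}u$ by exactly three operations: truncation $(\cdot)^+$, multiplication of the resulting nonnegative factors, and minimization over the orthonormal frames in $\G$. I would verify that each of these operations preserves the monotonicity required by the definition of a degenerate elliptic scheme, and then conclude by composition. The meta-principle I would rely on is that a non-decreasing function of degenerate elliptic operators is again degenerate elliptic; so it suffices to (i) identify a degenerate elliptic building block, and (ii) show that $\mao^{M}[u]$ is a non-decreasing function of these blocks. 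For (i), the base case is that each directional second difference
\[
\Dt_{\nu\nu}u_i = \frac{1}{\abs{\nu}^2 h^2}\left( \bigl(u(x_i+\nu h)-u_i\bigr) + \bigl(u(x_i-\nu h)-u_i\bigr)\right)
\]
is the standard monotone finite-difference discretization of a second derivative along $\nu$: it carries nonnegative weights on the off-center values, and so is degenerate elliptic in the sense of the framework. The whole argument then reduces to step (ii).

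For step (ii) I would check the three assembling operations one at a time. The map $x\mapsto x^+=\max(x,0)$ is non-decreasing, so each truncated factor $(\Dt_{\nu_j\nu_j}u)^+$ is a non-decreasing function of $\Dt_{\nu_j\nu_j}u$. The product $\prod_{j}(\Dt_{\nu_j\nu_j}u)^+$ is a product of \emph{nonnegative} terms, and a product of nonnegative numbers is non-decreasing in each factor when the remaining factors are held fixed and nonnegative; hence the product is non-decreasing in each $\Dt_{\nu_j\nu_j}u$. Finally, a minimum of non-decreasing functions is non-decreasing, so the outer $\min$ over frames in $\G$ preserves the property. Composing these steps shows that $\mao^{M}[u]$ is a non-decreasing function of the collection $\{\Dt_{\nu_j\nu_j}u\}$, which together with the base case gives degenerate ellipticity of~\ref{MAmon}. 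For the regularized scheme~\ref{MAreg} the identical chain of reasoning applies, since the regularization replaces $x^+$ by ${\max}^\delta(\cdot,0)$ and each $\min$ by a succession of ${\min}^\delta$, all of which were constructed to be (smooth and) non-decreasing in every argument; monotonicity therefore propagates exactly as before.

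The step requiring the most care is the product. In general a product of non-decreasing functions need not be monotone, and the argument succeeds only because the truncation $(\cdot)^+$ renders every factor nonnegative \emph{before} multiplication — this is precisely why the scheme applies $(\cdot)^+$ to each directional second difference individually rather than to the final product. I would make this nonnegativity explicit and observe that raising one factor while the others remain fixed and $\geq 0$ cannot decrease the product. The remaining point to verify is the behavior near $\partial\Omega$: there the wide stencil for $\Dt_{\nu\nu}u$ is replaced by an interpolated boundary stencil, and I must confirm that this modified stencil still writes the second difference as a nonnegative-weight combination of admissible interior and boundary values, so that the base-case monotonicity — and hence the entire composition argument — is unaffected. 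Granting this, degenerate ellipticity of both~\ref{MAmon} and~\ref{MAreg} follows.
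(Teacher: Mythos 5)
Your proposal is correct and follows essentially the same route as the paper's proof: identify the directional second differences $\Dt_{\nu\nu}u$ as degenerate elliptic building blocks, and observe that the scheme is a non-decreasing function of them, with the regularized scheme handled identically because ${\max}^\delta$ and ${\min}^\delta$ are non-decreasing in each argument. Your version is in fact slightly more careful than the paper's, which compresses the key step into the phrase ``non-decreasing combination'': you make explicit that the product step works only because the truncation $(\cdot)^+$ (or ${\max}^\delta(\cdot,0)$) forces every factor to be nonnegative before multiplication, and you also flag that the interpolated boundary stencils must retain nonnegative weights.
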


\begin{proof}
From their definition, the discrete second directional derivatives $\Dt_{\nu \nu}u$ are non-decreasing functions of the differences between neighboring values and reference values, $u_j - u_i$, where $u_j$ is one of the neighboring values in the direction $\nu$.
The scheme~\ref{MAmon} is a non-decreasing combination of the operators $\min$ and $\max$ applied to the degenerate elliptic terms $\Dt_{\nu \nu}u$, so it is also degenerate elliptic.

We recall from the construction of the scheme in \autoref{sec:regularized} that the regularized scheme~\ref{MAreg} comes from replacing the operations of $\min$ and $\max$ in~\ref{MAmon} by a non-decreasing regularization of these operations.
Thus the combined scheme is also degenerate elliptic.
\end{proof}

We also require the schemes~\ref{MAreg} and~\ref{MAmon} to be consistent with the \MA equation.  We prove consistency of~\ref{MAreg} since consistency of~\ref{MAmon} is a special case.

\begin{definition}
The scheme $MA^{h,d\theta,\delta}$ is consistent with the equation~\eqref{MA} at $x_0$ if for every twice continuously differentiable function $\phi(x)$ defined in a neighborhood of $x_0$,
\[ MA^{h,d\theta,\delta}[\phi](x_0) \to MA[\phi](x_0) \text{ as } h,d\theta,\delta \to 0. \]
The global scheme defined on $\Omega$ is consistent if this limit holds uniformly for all $x \in\Omega$.
\end{definition}

\begin{lemma}\label{lem:consistency}
Let $x_0\in\Omega$ be a reference point on the grid and $\phi(x)$ be a twice continuously differentiable function that is defined and convex in a neighborhood of the grid.  Then the scheme $\mao^\delta[\phi]$ defined in~\ref{MAreg} approximates the PDE $MA[\phi]$ with accuracy
\[ \mao^\delta[\phi](x_0) = MA[\phi](x_0) + \bO(h^2 + d\theta + r(\delta)) \]
where the function $r(\delta)$ converges to zero as $\delta \to 0$.
\end{lemma}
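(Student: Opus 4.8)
The plan is to establish the consistency estimate by separating the three sources of error---spatial ($h$), directional ($d\theta$), and regularization ($\delta$)---and bounding each independently before combining them via triangle inequality. The guiding principle is that the regularized scheme $\mao^\delta$ sits between two approximations whose errors are already understood, so I want to track how the error propagates through the nested $\min$/$\max$ structure of the operator.

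First I would handle the spatial discretization. Since $\phi \in C^2$ is convex in a neighborhood of the grid, standard Taylor expansion of the centered difference operator gives $\Dt_{\nu\nu}\phi = \partial^2\phi/\partial\nu^2 + \bO(h^2)$ along each grid direction $\nu$ (using $C^4$ regularity for the full $h^2$ rate, or noting that for merely $C^2$ data the statement is read as the formal truncation error). Next I would address the directional error: the scheme minimizes the product over the finite set $\G$ of grid-aligned orthonormal frames rather than over all of $V$ as in Lemma~\ref{thm:eigs}. Because the true $\det^+(D^2\phi)$ is attained at the eigenframe, and any eigenframe can be approximated to within angular resolution $d\theta$ by some frame in $\G$, the discrete minimum overshoots the continuous one by at most $\bO(d\theta)$; here I would invoke the Lipschitz dependence of the product $\prod_j (\nu_j^T D^2\phi\,\nu_j)^+$ on the frame, using that $\phi$ is $C^2$ so $D^2\phi$ is bounded on the neighborhood. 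Finally, the regularization error comes from replacing each $\max$ and $\min$ (and each $(\cdot)^+$) by its smoothed version; since $|{\max}^\delta(a,b)-\max(a,b)| \le \delta/2$ and likewise for ${\min}^\delta$, each substitution introduces an error controlled by $\delta$, and propagating through the finitely many $\min$/$\max$ operations and the product (again Lipschitz in its arguments on a bounded set) yields a bound $r(\delta) \to 0$.

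The main obstacle I anticipate is controlling how the $\bO(\delta)$-level perturbations at each $\max$/$\min$ node accumulate through the \emph{product} and the \emph{iterated minimum}. A product of regularized positive factors is only Lipschitz in each factor when the other factors are bounded, which holds here because $D^2\phi$ is bounded and the finite-difference quotients of a $C^2$ function are uniformly bounded on the neighborhood; I would make this precise by a telescoping estimate of the form $|\prod a_j - \prod b_j| \le \sum_k (\prod_{j<k} a_j)(\prod_{j>k} b_j)\,|a_k - b_k|$. One subtlety is that the regularized $(\cdot)^{+,\delta}$ can be strictly positive even where the true factor vanishes, so near singular matrices the factor-wise error is genuinely $\bO(\delta)$ rather than smaller; this is why the conclusion is stated with an unspecified modulus $r(\delta)$ rather than a clean power of $\delta$.

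Having bounded the three contributions separately, I would combine them. Writing $\mao^M[\phi]$ for the unregularized wide-stencil scheme~\ref{MAmon}, the triangle inequality gives
\[
\left| \mao^\delta[\phi](x_0) - MA[\phi](x_0) \right|
\le \left| \mao^\delta[\phi](x_0) - \mao^M[\phi](x_0) \right|
+ \left| \mao^M[\phi](x_0) - MA[\phi](x_0) \right|,
\]
where the first term is $\bO(r(\delta))$ by the regularization analysis and the second is $\bO(h^2 + d\theta)$ by the spatial and directional analyses combined with the variational characterization of Lemma~\ref{thm:eigs}. Since the bounds on $D^2\phi$ and the difference quotients are uniform over $x_0$ in the neighborhood, the estimate holds uniformly, giving the claimed global consistency $\mao^\delta[\phi](x_0) = MA[\phi](x_0) + \bO(h^2 + d\theta + r(\delta))$.
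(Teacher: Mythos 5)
Your proposal is correct and takes essentially the same approach as the paper's proof: Taylor expansion for the $\bO(h^2)$ term, a two-sided comparison based on the variational characterization of Lemma~\ref{thm:eigs} (the inclusion $\G\subset V$ giving the lower bound, and a grid frame within $d\theta$ of the eigenframe giving the upper bound), and uniform convergence of the regularized $\min$/$\max$ operations for the $r(\delta)$ term. The only differences are organizational---you split off the regularization error via a triangle inequality against the unregularized scheme $\mao^{M}$, whereas the paper carries the $\bO(\delta)$ perturbations through the sandwich argument and smooths the outer minimum last---plus your explicit telescoping product estimate and the remark that the formal $\bO(h^2)$ rate really presumes more than $C^2$ regularity, which are finer points the paper leaves implicit.
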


\begin{proof}
From a simple Taylor series computation we have
\[ \Dt_{\nu\nu}\phi(x_0) = \phi_{\nu\nu}(x_0) + \bO(h^2). \]
We also recall that in \autoref{sec:regularized} we regularized the second directional derivatives to obtain
\[ \Dt^\delta_{\nu\nu}\phi(x_0) = \max\{\Dt_{\nu\nu}\phi(x_0),0\} + \bO(\delta) = \phi_{\nu\nu}(x_0) + \bO(h^2+\delta). \]
Here the maximum has no effect since we are considering convex $\phi$.

Now we recall that the \MA operator can be expressed as
\[ 
\min\limits_{\nu\in V}\prod\limits_{j=1}^d u_{\nu_j\nu_j} = \prod\limits_{j=1}^d u_{v_jv_j},
\]
where the $v_j$ are orthogonal unit vectors, which may not be in the set of grid vectors $\G$.  We can then choose a set of vectors 
\[
\frac{v + dv}{\abs{v+dv}} \in \G,
\]
so that each remainder $\abs{dv_j} = \bO(d\theta)$.

Now we consider the discretized problem
\begin{align*}
\min\limits_{\nu\in\G}\prod\limits_{j=1}^d \Dt^\delta_{\nu_j\nu_j}\phi(x_0) 
  &= \min\limits_{\nu\in\G}\prod\limits_{j=1}^d \Dt_{\nu_j\nu_j}\phi(x_0)  + \bO(\delta)\\
  &\leq \prod\limits_{j=1}^d \Dt_{(v_j+dv_j)(v_j+dv_j)}\phi(x_0)  + \bO(\delta)\\
  &=\prod\limits_{j=1}^d \frac{(v_j+dv_j)^TD^2\phi(x_0)(v_j+dv_j)}{\abs{v_j+dv_j}^2} + \bO(h^2 + \delta)\\
  &= \prod\limits_{j=1}^dv_j^TD^2\phi(x_0)v_j + \bO(h^2+d\theta+\delta)\\
  &= \min\limits_{\nu\in V}\prod\limits_{j=1}^d \phi_{\nu_j\nu_j}(x_0) + \bO(h^2+d\theta+\delta).
\end{align*}

In addition, since the set of grid vectors $\G$ is a subset of the set of all orthogonal vectors $V$, we find that
\begin{align*}
\min\limits_{\nu\in\G}\prod\limits_{j=1}^d \Dt^\delta_{\nu_j\nu_j}\phi(x_0) 
  &\geq \min\limits_{\nu\in V}\prod\limits_{j=1}^d\Dt_{\nu_j\nu_j}\phi(x_0) + \bO(\delta)\\
  &= \min\limits_{\nu \in V}\prod\limits_{j=1}^d \phi_{\nu_j\nu_j}(x_0) + \bO(h^2+\delta).
\end{align*}

We conclude that
\[ \min\limits_{\nu\in\G}\prod\limits_{j=1}^d \Dt^\delta_{\nu_j\nu_j}\phi(x_0) = \min\limits_{\nu\in V}\prod\limits_{j=1}^d \phi_{\nu_j\nu_j}(x_0) + \bO(h^2+d\theta+\delta),\]
which is precisely the characterization of the \MA operator given in~\autoref{thm:eigs}

Finally, we replace the minimum in the above scheme with a smooth function, which converges uniformly as $\delta\to0$ by construction (\autoref{sec:regdisc}).  Thus the resulting scheme will satisfy
\[ \mao^\delta[\phi](x_0) = \det(D^2 \phi(x_0)) + \bO(h^2 + d\theta + r(\delta)). \qedhere\]

\end{proof}


\section{A semi-implicit solution method}
Any  discretization of~\eqref{MA} leads to a system of nonlinear equations that must be solved in order to obtain the approximate solution.   
Newton's method requires a good initial value to converge.

In this section, we describe a semi-implicit solution method for~\eqref{MA}.
One iteration of this method will be used to compute the initial value for Newton's method.

First we describe the fully explicit method.

\subsection{Explicit solution methods for monotone schemes}
Using a monotone discretization $F[u]$ of the \MA operator,  
the fixed point iteration,
\[
u^{n+1} = u^n + dt (F[u^n]-f) 
\]
is a contraction in $\ell^\infty$, provided $dt$ is small enough.
This iteration corresponds to solving the parabolic version of the equation using a forward Euler discretization.

Explicit iterative methods have the advantage that they are simple to implement, but the number of iterations required suffers from the well known CFL condition (which applies in a nonlinear form to monotone discretizations, as explained in~\cite{ObermanSINUM}).  
This approach is slow because for stability it requires a small time step $dt$, which decreases with the spatial resolution $h$.  The time step, which can by computed explicitly, is $\bO(h^2)$. This was the approach used in~\cite{ObermanEigenvalues}.
 
\subsection{A semi-implicit solution method}\label{sec:SI}
The next method we discuss is a semi-implicit method, which involves solving a Poisson equation at each iteration.

In~\cite{BeFrObMA} we used the identity~\eqref{eq:poisson} to build a semi-implicit solution method.  We showed that the method is a contraction, but the strictness of the contraction requires strict positivity of $f$.  In practice, this meant that the iteration was fast for regular solutions, but became slower than the explicit method when $f$ was zero in large parts of the domain.


We begin with the following identity for the Laplacian in two dimensions,
\[
\abs{\Delta u} = \sqrt{(\Delta u)^2}
= \sqrt{u_{xx}^2+u_{yy}^2+2u_{xx}u_{yy}}. 
\]
If $u$ solves the \MA equation, then
\begin{align*}
\abs{\Delta u} = \sqrt{u_{xx}^2+u_{yy}^2+2u_{xy}^2+2f} = 
\sqrt{ \norm{D^2u}^2 +  2f}.
\end{align*}
This leads to a semi-implicit scheme for solving the~\MA equation, used in~\cite{BeFrObMA}. 
\bq\label{eq:poisson}
  \Delta u^{n+1} = \sqrt{2f +  \norm{D^2u^n}^2 } .
\eq
To generalize this to  $\R^d$, we can write the Laplacian in terms of the eigenvalues of the Hessian: $\Delta u = \sum_{i=1}^d \lambda_i[D^2u]$.  
Taking the $d$-th power and expanding gives the sum of all possible products of $d$ eigenvalues
\begin{align*}
(\Delta u)^d
  &= d! \prod\limits_{i=1}^d\lambda_i + P(\lambda_1, \dots, \lambda_d),
\end{align*}
where $P(\lambda)$ is a $d$-homogeneous polynomial, which we will not need explicitly.

The result is the semi-implicit scheme 
\bq\label{SImplicit}
  \Delta u^{n+1} = (d! f +   P(\lambda_1[D^2u^n], \dots, \lambda_d[D^2u^n]))^{1/d}. 
\eq
A natural initial value for the iteration is given by the solution of
\bq\label{NMinit}
\Delta u^0 = (d! f)^{1/d}.
\eq


\section{Newton's method}\label{sec:newton}
We perform the damped Newton iteration
\[ u^{n+1} = u^n  - \alpha\delu^n \]
to solve the discretized equation
\[
\mao^{M}[u] = f,
\]
where the damping parameter $\alpha$, $0<\alpha<1$, 
 is chosen  at each step to ensure that the residual $\|\mao^{M}(u^n)-f\|$ is decreasing. 

The corrector $\delu^n$ solves the linear system
\bq\label{NewtonN2}
\left ( \grad_u \mao^{M} [u^n] \right )\delu^n = \mao^{M}[u^n]-f.
\eq
Here the left hand side is our notation for the Jacobian matrix of the scheme.
The Jacobian matrix for the monotone discretization is obtained by using Danskin's Theorem~\cite{Bertsekas} and the product rule.
\[ \grad_u \mao^{M}[u] 
= \sum\limits_{j=1}^d\text{diag}\left( \prod\limits_{k \neq j}\Dt_{\nu_k^*\nu_k^*}u \right)\Dt_{\nu_j^*\nu_j^*}  \]
where the $\nu_j^*$ are the directions active in the minimum in~\ref{MAmon}.

In order for the linear equation~\eqref{NewtonN2} to be well-posed, we require
the coefficient matrix to be positive definite. 
As observed in Lemma~\ref{lem:linell}, this condition can fail if the iterate $u^n$ is not strictly convex.

\subsection{Convergence of Newton's Method for the regularized discretization}\label{sec:regdisc}
While the discretization~\ref{MAmon} is designed to be robust enough to converge to singular solutions of~\eqref{MA}, we wish to compute these solutions quickly using Newton's Method.  
Newton's Method requires that the objective function be differentiable (or at least directionally differentiable)  with an invertible Jacobian matrix.
However, the scheme~\ref{MAmon} is non-differentiable and, even at 
points of differentiability, it can have a degenerate Jacobian matrix.   The latter possibility corresponds to non-strictly convex solutions, which have degenerate Hessians. Newton's method applied to this scheme can break down for singular solutions.

The objective of this section is to show that the regularized scheme~\ref{MAreg} is designed so that even singular solutions can be computed using Newton's method.

\begin{theorem}[Newton's Method for the Discretized \MA Equation]\label{thm:newtondisc}
Suppose the PDE~\eqref{MA} has a unique viscosity solution.  Then Newton's method for the discretized system given by~\ref{MAreg} converges quadratically.
\end{theorem}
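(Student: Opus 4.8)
The plan is to establish quadratic convergence by verifying the two standard hypotheses of the Newton--Kantorovich theorem (or the classical local convergence theorem for Newton's method) for the map $G[u] = \mao^\delta[u] - f$: namely, that the Jacobian $\grad_u \mao^\delta[u]$ is invertible at (and near) the solution $u^*$ of the scheme, and that this Jacobian is locally Lipschitz continuous. Since the regularized scheme is smooth by construction (\autoref{sec:regularized}), the map $G$ is $C^1$ and its Jacobian is in fact smooth in the entries of $u$; hence the Lipschitz bound on $\grad_u \mao^\delta$ follows immediately on any bounded neighborhood, and the real content lies entirely in invertibility of the Jacobian.

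First I would write the Jacobian explicitly. By the chain rule applied to the regularized operator---built up from the smooth monotone substitutes ${\max}^\delta$ and ${\min}^\delta$---the Jacobian has the form of a nonnegative combination of the linear finite difference operators $\Dt_{\nu_j \nu_j}$, with weights given by products of the regularized directional second differences $(\Dt_{\nu_j \nu_j} u)^{+,\delta}$ and by the derivatives of the smooth $\min$/$\max$ functions. The key point is that the regularization forces every such factor to be \emph{strictly positive}: because ${\max}^\delta(a,0) = \tfrac12(a + \sqrt{a^2 + \delta^2}) \ge \tfrac{\delta}{2} > 0$ for all $a$, each regularized directional term is bounded below by a positive constant depending on $\delta$, and similarly the derivatives of ${\min}^\delta$ in each argument are strictly positive. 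This is precisely the gap between~\ref{MAmon} and~\ref{MAreg}: the unregularized scheme can produce a zero factor (a vanishing eigenvalue / non-strictly convex iterate), killing invertibility, whereas the $\delta$-regularization keeps every weight away from zero.

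Next I would argue that this strict positivity makes the Jacobian a diagonally dominant $M$-matrix, hence invertible. Each $\Dt_{\nu_j \nu_j}$ has a positive diagonal entry and nonpositive off-diagonal entries summing (in absolute value) to the diagonal, so a positive combination of them inherits weak diagonal dominance; strict dominance, and thus nonsingularity, comes from the Dirichlet boundary rows, exactly as in the degenerate-ellipticity argument of \autoref{lem:deg}. Equivalently, one may invoke \autoref{thm:euschemes}: the regularized scheme is proper, locally Lipschitz, and degenerate elliptic, so it has a unique, stable solution $u^*$, and the strictly positive weights guarantee the linearization at $u^*$ is a genuinely (not merely degenerately) elliptic linear scheme, hence invertible. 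With invertibility at $u^*$ in hand, continuity of $u \mapsto \grad_u \mao^\delta[u]$ extends invertibility to a neighborhood, and the two Newton--Kantorovich hypotheses are met, yielding local quadratic convergence.

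The main obstacle, and the step I would treat most carefully, is the invertibility/strict-positivity argument: one must confirm that the chain-rule weights arising from the nested regularized minima and the regularized positive parts are \emph{all} uniformly bounded below by a positive constant (for fixed $\delta$), so that no degenerate direction can appear even when the underlying iterate is only weakly convex or nonconvex. The bookkeeping of the nested ${\min}^\delta$ derivatives is where an error could hide, so I would isolate the lower bound $(\Dt_{\nu \nu}u)^{+,\delta} \ge \delta/2$ and a companion lower bound on $\partial_a {\min}^\delta$ and $\partial_b {\min}^\delta$ first, and only then assemble the full Jacobian. A secondary caveat worth flagging is that the quadratic convergence is for the scheme with $\delta$ fixed; the constants (both the Lipschitz constant and the lower bound on the weights) degrade as $\delta \to 0$, which is consistent with the breakdown of Newton's method for the unregularized scheme~\ref{MAmon} on singular solutions.
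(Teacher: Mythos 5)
Your proposal follows essentially the same route as the paper's proof: both reduce the theorem to the classical local convergence result for Newton's method (the paper's Lemma~\ref{thm:newton}, quoted from Kelley), get Lipschitz continuity of the Jacobian from smoothness of the regularized scheme, and obtain nonsingularity of the Jacobian from the strict monotonicity that the $\delta$-regularization builds in. The only real divergence is the final linear-algebra step: the paper writes $\nabla_u \mao^\delta[u] = \sum_{\nu^k\in\G}\sum_{j=1}^{d} A_{jk}(u)\Dt_{\nu_j^k\nu_j^k}$ with each $A_{jk}(u)$ a positive definite diagonal matrix and concludes the Jacobian is negative definite, hence invertible, whereas you conclude invertibility via diagonal dominance and the M-matrix property, with strictness supplied by the Dirichlet boundary rows. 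Both mechanisms rest on the same structural fact---strictly positive chain-rule weights multiplying the second-difference operators---so this is a cosmetic rather than substantive difference. One point in your favor: you cite Theorem~\ref{thm:euschemes} for existence of the discrete solution $u^*$, which is the natural reference, while the paper points to Theorem~\ref{thm:viscosity}.

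One inequality you state is false and should be repaired before you isolate it as a lemma: ${\max}^\delta(a,0)=\tfrac{1}{2}\left(a+\sqrt{a^2+\delta^2}\right)$ is \emph{not} bounded below by $\delta/2$ for all $a$; as $a\to-\infty$ it behaves like $\delta^2/(4\abs{a})\to 0$ (for instance $a=-10$, $\delta=1$ gives roughly $0.025$). What is true, and all you need, is strict positivity of ${\max}^\delta(\cdot,0)$ and of the partial derivatives of ${\min}^\delta$; on any bounded neighborhood of $u^*$ this yields a positive lower bound on the weights, with constant depending on $\delta$ and the neighborhood. Since the cited Newton lemma requires only nonsingularity at $u^*$ together with local Lipschitz continuity of the Jacobian, this repaired version suffices, and your closing caveat that the constants degrade as $\delta\to 0$ is correct and consistent with the paper's remarks on the breakdown of Newton's method for the unregularized scheme~\ref{MAmon}.
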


In order to prove this result, we recall a standard result on the convergence of Newton's method for a system of equations~\cite{Kelley}.  

\begin{lemma}[Newton's Method for a System of Equations]\label{thm:newton}
Consider a system of equations $F[u] = 0$ where the operator $F:\Rd\to\Rd$ and let $U \subset \Rd$ be open.  Suppose the following conditions hold:
\begin{enumerate}
\item A solution $u^* \in U$ exists.
\item $\nabla F:U\to\R^{N\times N}$ is Lipschitz continuous.
\item $\nabla F(u^*)$ is non-singular.
\end{enumerate}
Then the Newton iteration
\[ u^{n+1} = u^n - \nabla F(u^n)^{-1}F(u^n)  \]
converges quadratically to $u^*$ if $u^0\in U$ is sufficiently close to $u^*$.
\end{lemma}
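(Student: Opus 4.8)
The plan is to prove this classical local convergence result by the standard Newton--Kantorovich argument: track the error $e_n = u^n - u^*$ and show it satisfies a quadratic recurrence of the form $\|e_{n+1}\| \le C\|e_n\|^2$ on a suitably small ball around $u^*$, from which quadratic convergence is immediate once the iterates are trapped in that ball. The three ingredients are a uniform bound on $\|\grad F(\cdot)^{-1}\|$ near $u^*$, the integral (Taylor) form of the remainder for $F$, and the Lipschitz hypothesis on $\grad F$.

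First I would use conditions (2) and (3) to produce a neighborhood of boundedly invertible Jacobians. Because the set of invertible matrices is open and matrix inversion is continuous there, and because $\grad F$ is continuous (being Lipschitz) with $\grad F(u^*)$ non-singular, there exist a radius $\rho>0$ with $B_\rho(u^*)\subset U$ and a constant $M$ such that $\grad F(u)$ is invertible and $\|\grad F(u)^{-1}\|\le M$ for every $u\in B_\rho(u^*)$. This guarantees the Newton step is well-defined throughout the neighborhood, not merely at $u^*$.

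Next I would derive the quadratic estimate. Writing $u^{n+1}-u^* = e_n - \grad F(u^n)^{-1}F(u^n)$ gives $e_{n+1} = \grad F(u^n)^{-1}\bigl[\grad F(u^n)\,e_n - F(u^n)\bigr]$, so I must bound the defect between $F$ and its linearization. Using $F(u^*)=0$ and the fundamental theorem of calculus along the segment from $u^*$ to $u^n$, one has $F(u^n) = \int_0^1 \grad F(u^*+t e_n)\,e_n\,dt$, whence $\grad F(u^n)e_n - F(u^n) = \int_0^1 [\grad F(u^n)-\grad F(u^*+t e_n)]\,e_n\,dt$. The Lipschitz bound with constant $L$ yields $\|\grad F(u^n)-\grad F(u^*+t e_n)\| \le L(1-t)\|e_n\|$, and integrating $\int_0^1(1-t)\,dt = \tfrac12$ gives $\|\grad F(u^n)e_n - F(u^n)\| \le \tfrac{L}{2}\|e_n\|^2$. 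Combining with the inverse bound produces $\|e_{n+1}\| \le \tfrac{ML}{2}\|e_n\|^2$.

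Finally I would close the induction by making a ball invariant. Choosing any $r \le \min\{\rho,\,1/(ML)\}$, the hypothesis $\|e_n\|\le r$ forces $\|e_{n+1}\| \le \tfrac{ML}{2}\|e_n\|\cdot\|e_n\| \le \tfrac12\|e_n\| < r$, so if $u^0\in B_r(u^*)$ all iterates remain in $B_r(u^*)$, the errors tend to zero, and the recurrence $\|e_{n+1}\|\le \tfrac{ML}{2}\|e_n\|^2$ is exactly quadratic convergence. I expect the only real subtlety to be this first step: the whole argument depends on keeping the iterates inside the region where $\grad F$ is boundedly invertible, which is precisely why the conclusion is local and requires $u^0$ sufficiently close to $u^*$; the remaining estimates are a routine consequence of Taylor's theorem and the Lipschitz hypothesis.
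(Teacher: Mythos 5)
Your proof is correct and complete: the neighborhood of bounded invertibility obtained from conditions (2) and (3), the integral form of the Taylor remainder combined with the Lipschitz bound to get $\|e_{n+1}\|\le \tfrac{ML}{2}\|e_n\|^2$, and the invariant-ball induction with $r\le\min\{\rho,1/(ML)\}$ are all sound, and the subtlety you flag (keeping every iterate in the region of bounded invertibility, with the segment from $u^*$ to $u^n$ staying inside the convex ball so the Lipschitz estimate applies) is handled properly. For comparison, the paper does not prove this lemma at all: it is stated as a recalled standard result with a citation to~\cite{Kelley}, and is then used as the key ingredient in the proof of Theorem~\ref{thm:newtondisc}. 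So your argument supplies precisely what the paper delegates to the literature, and it is the same standard local-convergence argument (Banach-lemma invertibility plus quadratic defect estimate) that the cited reference contains; there is no competing approach in the paper itself to weigh it against.
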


\begin{remark}
In the proof below we use the fact that the discretization~\ref{MAreg} is degenerate elliptic, which leads to a positive definite and nonsingular Jacobian $\grad F$ in the Newton iteration.  
\end{remark}

\begin{proof}[Proof of Theorem~\ref{thm:newtondisc}]
For any fixed grid, the discretized system of equations has a solution, as established in Theorem~\ref{thm:viscosity}.

The scheme~\ref{MAreg} is smooth in $u$ and the Jacobian is therefore locally Lipschitz continuous.

By construction, the discrete \MA operator is strictly increasing in each of the discrete second directional derivatives (\autoref{sec:regularized}).  Thus the Jacobian will have the form
\[ \nabla_u \mao^\delta[u] = \sum\limits_{\nu^k\in\G}\sum\limits_{j=1}^{d} A_{jk}(u)\Dt_{\nu_j^k\nu_j^k} \]
where each of the $A_{jk}(u)$ is a positive definite diagonal matrix.  The Jacobian is negative definite and thus invertible.

By Theorem~\ref{thm:newton}, Newton's method converges for the discretized system~\ref{MAreg}.  \qedhere \end{proof}

\subsection{Initialization of Newton's method}\label{sec:init}
Newton's method requires a good initialization for the iteration.  Since we need the resulting linear system to be well-posed, it is essential that the initial iterate: (i) be convex, (ii) respect the boundary conditions, (iii) be close to the solution.   In addition, the computational cost of initialization should be low.

In order to achieve these conditions we use one step of the semi-implicit scheme~\eqref{SImplicit} to obtain a close initial value.  This amounts to solving~\eqref{NMinit} along with consistent Dirichlet boundary conditions~\eqref{Dirichlet}.
As necessary, we then convexify the result using the method of~\cite{ObermanCENumerics} (this last step was only needed for the most singular solution).
Since both these steps can be performed on a very coarse grid and interpolated onto the finer grid, the cost of  initialization is low.

\subsection{Preconditioning}\label{sec:reg}
In degenerate examples, the PDE for $\delu^n$~\eqref{NewtonN2} may be degenerate, which can lead to an ill-conditioned or singular Jacobian.  To get around this problem, we regularize the Jacobian to make sure the linear operator is strictly negative definite; this will not change the fixed points of Newton's method.  We accomplish this by replacing the second directional derivatives $u_{\nu\nu}$ with
\[ \tilde{u}_{\nu\nu} = \max\{u_{\nu\nu},\e\}. \]
Here $\e$ is a small parameter.  In the computations of \autoref{sec:2d}, we take $\e = \frac{1}{2dx^2}\ex{8}$.


\section{Computational results in two dimensions.}\label{sec:2d} 
In this section, we summarize the results of a number of two-dimensional examples computed using the methods described in this paper.  In particular, we are interested in comparing the computation time for Newton's method with the time required by the methods proposed in~\cite{BeFrObMA}. 

We perform the computations using a 17 point stencil on an $N \times N$ grid on a square.  

We solved all linear systems using the MATLAB backslash operator, which performs an LU decomposition of the sparse systems arising in Newton's method.  Naturally, computations could be made faster by using a compiled programming language or a more efficient linear solver.  However, this implementation is sufficient to demonstrate the efficiency of our method.

\subsection{Four representative examples}
We have tested the monotone scheme on a number of examples of varying regularity.  To illustrate these, we present detailed results for four representative examples.

To define these exact solutions, we first write $\xv = (x,y)$ and $\xv_0 = (.5, .5)$ for the center of the domain.

The first example, which is smooth and radial, is given by
\bq\label{eq:c2} 
u(\xv) = \exp \left( \frac{ \norm{\xv}^2}{2} \right),
\qquad 
f(\xv) = \left(1+ \norm{\xv}^2\right)\exp\left( \norm{\xv}^2 \right).
\eq
The second example, which is $C^1$, is given by
\bq\label{eq:c1} 
u(\xv) = \frac{1}{2}\left( (\norm{\xv-\xv_0} -0.2)^+\right )^2, 
\quad
f(\xv) = 
\left( 
1 - \frac{0.2}{\norm{\xv-\xv_0}}
\right)^+.
\eq

The third example is twice differentiable in the interior of the domain, but has an unbounded gradient near the boundary point $(1,1)$.  The solution is given by
\bq\label{eq:blowup} 
u(\xv) = -\sqrt{2-\norm{\xv}^2},
\qquad 
f(\xv) = 2 {\left(2-\norm{\xv}^2\right)^{-2} }.
\eq

The final example is the cone, which was discussed in~\autoref{sec:alex}.  It is Lipschitz continuous.
\bq\label{eq:cone} 
u(\xv) = \sqrt{\norm{\xv-\xv_0}},
\qquad
f = \mu = \pi \,\delta_{\xv_0}.
\eq
In order to approximate the solution on a grid with spatial resolution $h$ using viscosity solutions, we approximate the measure $\mu$ by its average over the ball of radius $h/2$, which gives
\[ 
f^h  = 
\begin{cases}
4/h^2 &  \text{ for } \norm{\xv - \xv_0} \leq h/2,\\
0 & \text{ otherwise.}
\end{cases}
\]

\subsection{Visualization of solutions and gradient maps}\label{sec:maps}
In~\autoref{fig:solutions} the solutions and the gradient maps for the first three representative examples are presented. For example~\eqref{eq:cone}, the gradient map is too singular to illustrate.
To visualize the maps, the image of a Cartesian mesh under the mapping 
\[ 
\left(\begin{array}{c}x\\y\end{array}\right) \to \left(\begin{array}{c}\Dt_{x}{u}\\ \Dt_{y}{u}\end{array}\right) \] 
is shown, where $(\Dt_x u,\Dt_y u)$ is the numerical gradient of the solution of the \MA equation.  
The image of a circle is plotted for visualization purposes; 
the equation is actually solved on a square. For reference, the identity mapping is also displayed.  

In each case, the maps agree with the maps obtained using the gradient of the exact solution.

\begin{figure}[htdp]
	\centering
	\subfigure[]{\includegraphics[width=.4\textwidth]{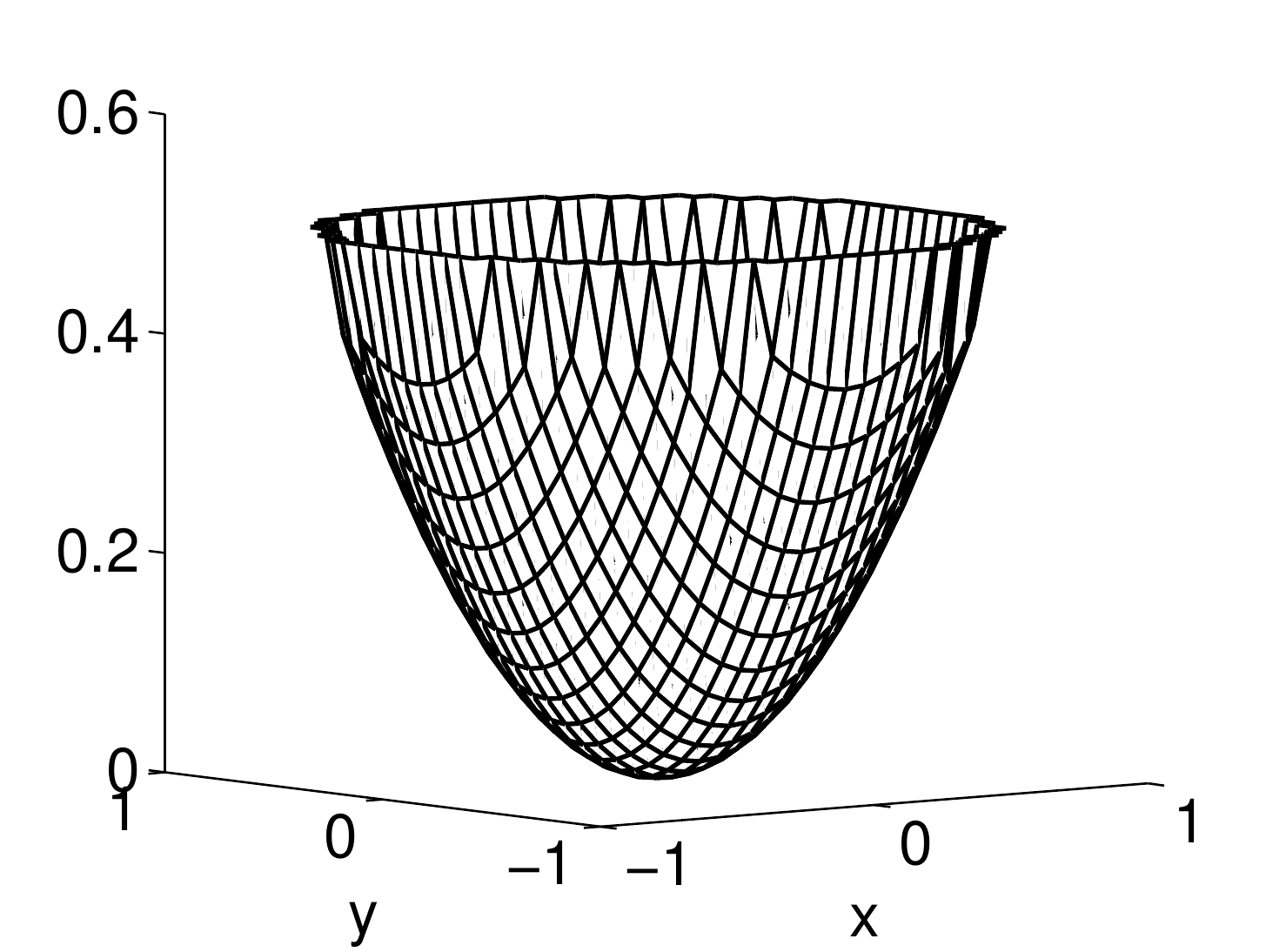}\label{fig:init_surf}}
        \subfigure[]{\includegraphics[width=.4\textwidth]{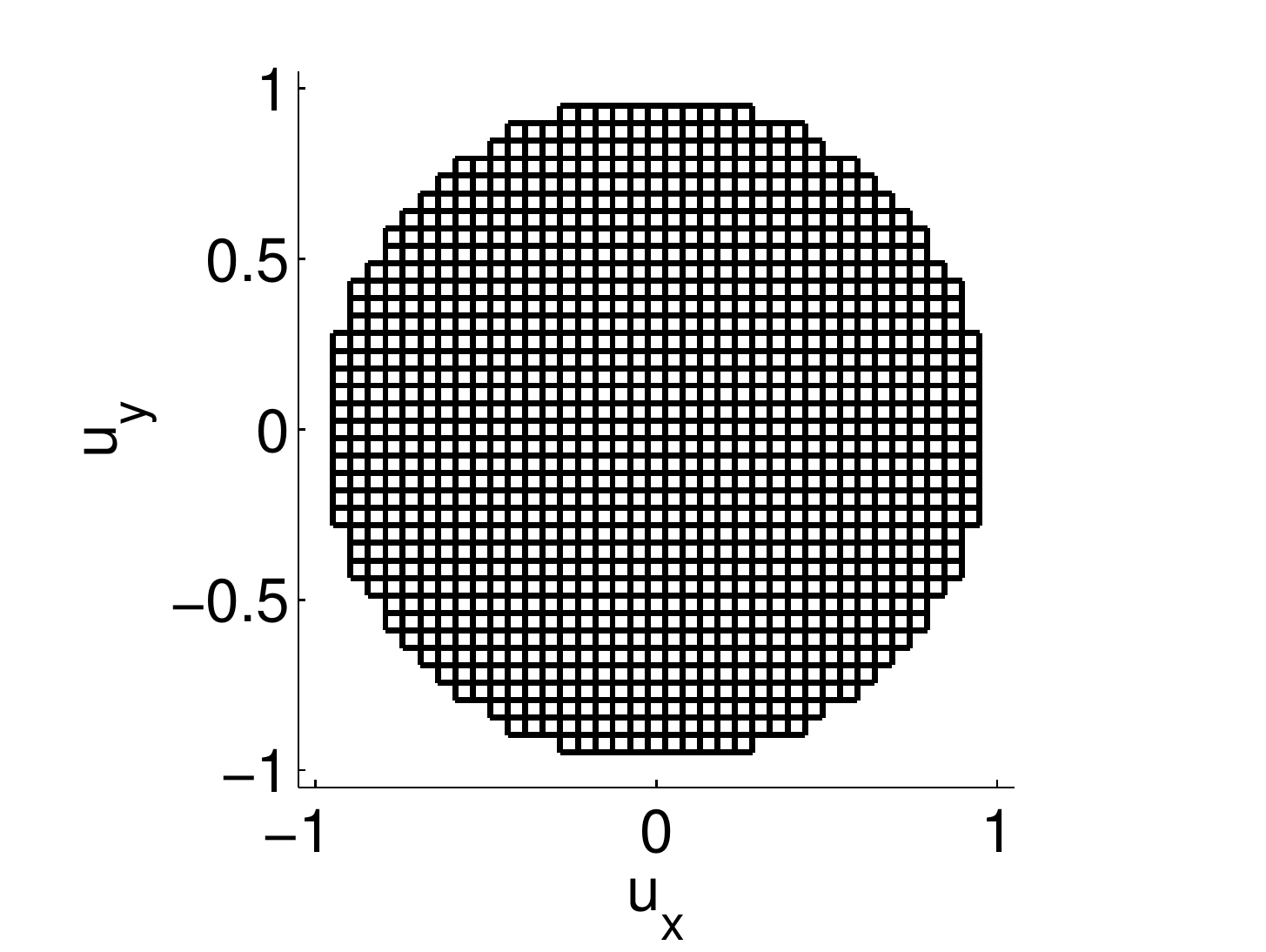}\label{fig:init_mesh}}
        \subfigure[]{\includegraphics[width=.4\textwidth]{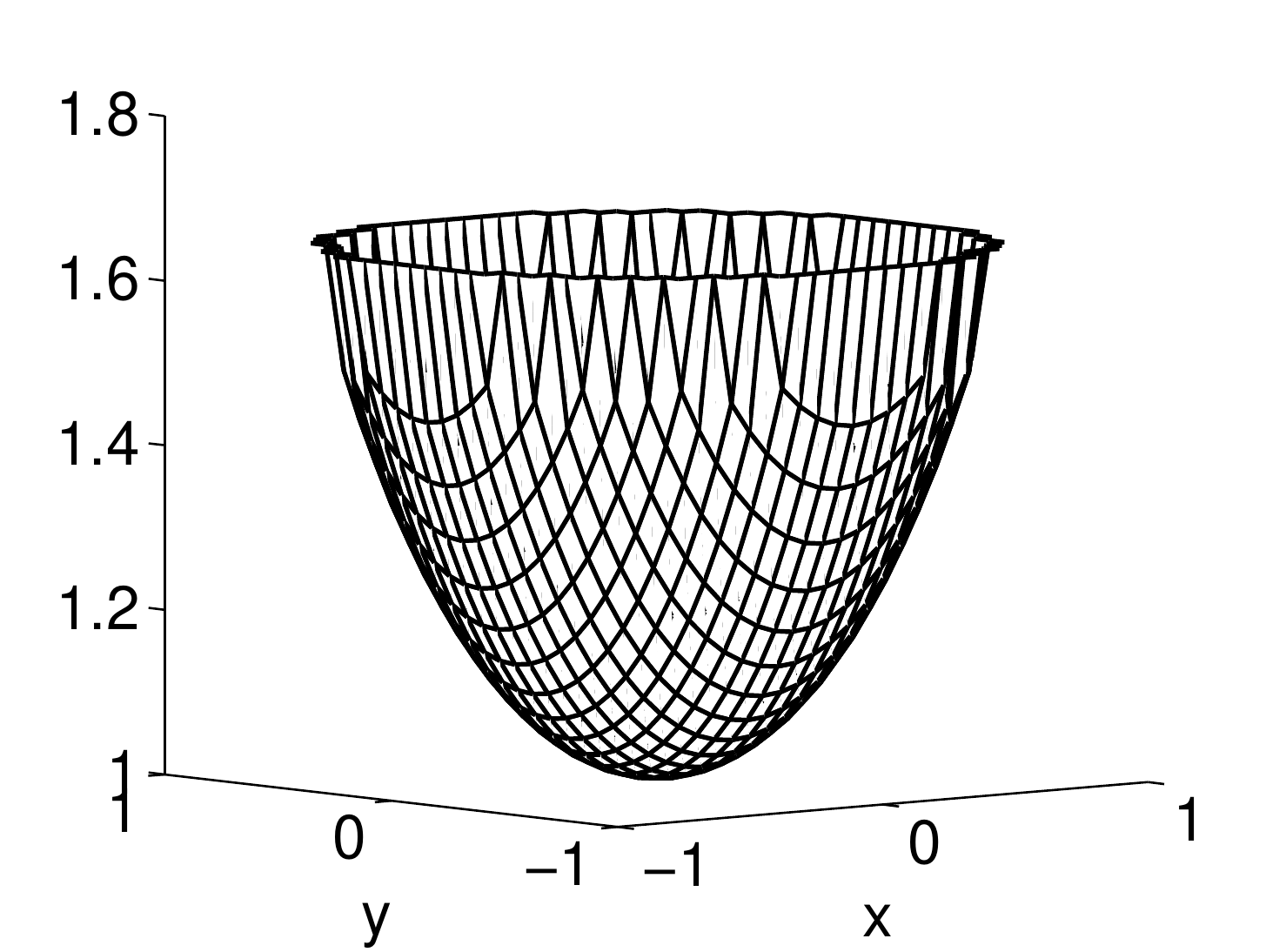}\label{fig:radial_surf}}
        \subfigure[]{\includegraphics[width=.4\textwidth]{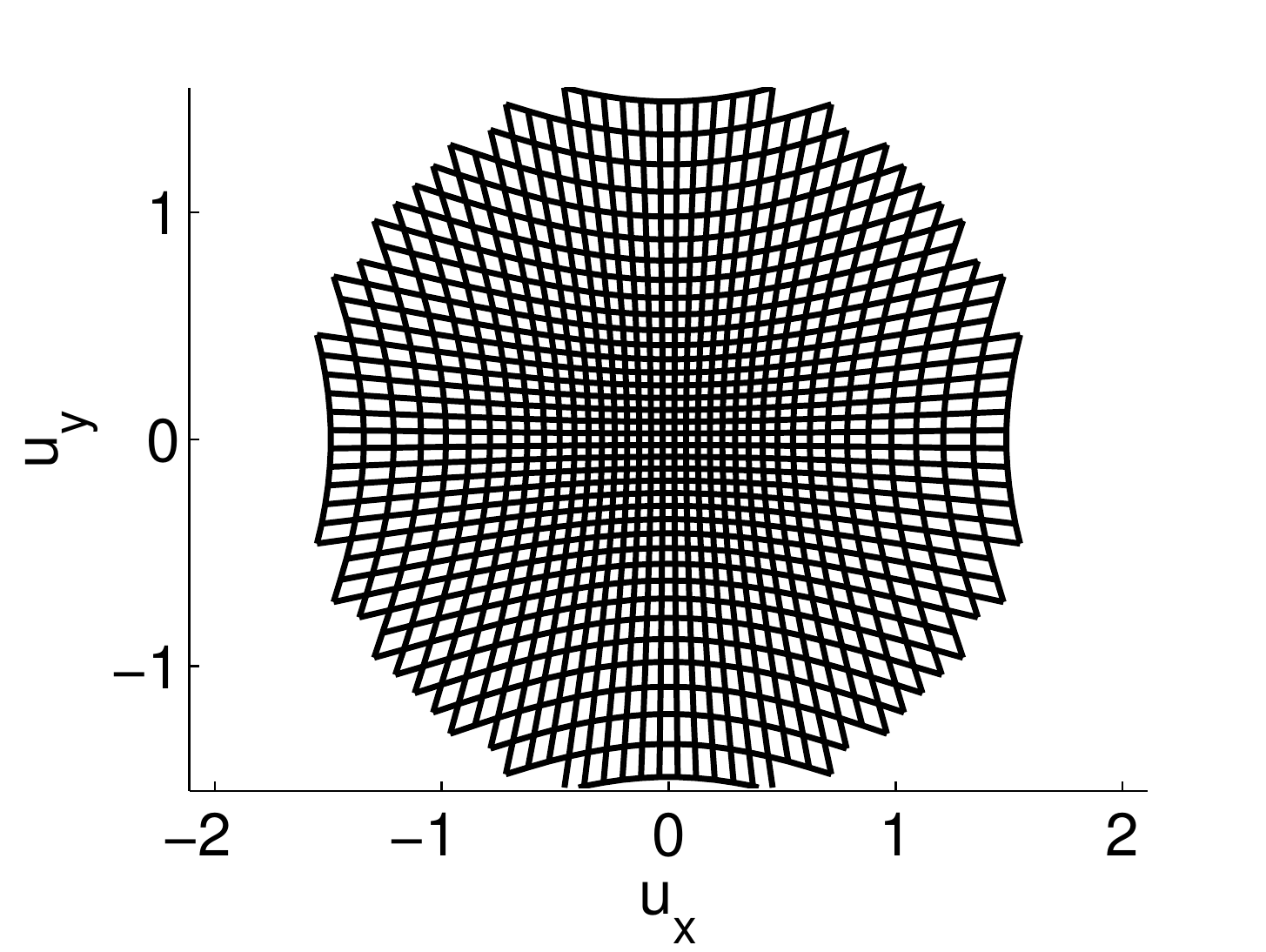}\label{fig:radial_mesh}}	
        \subfigure[]{\includegraphics[width=.4\textwidth]{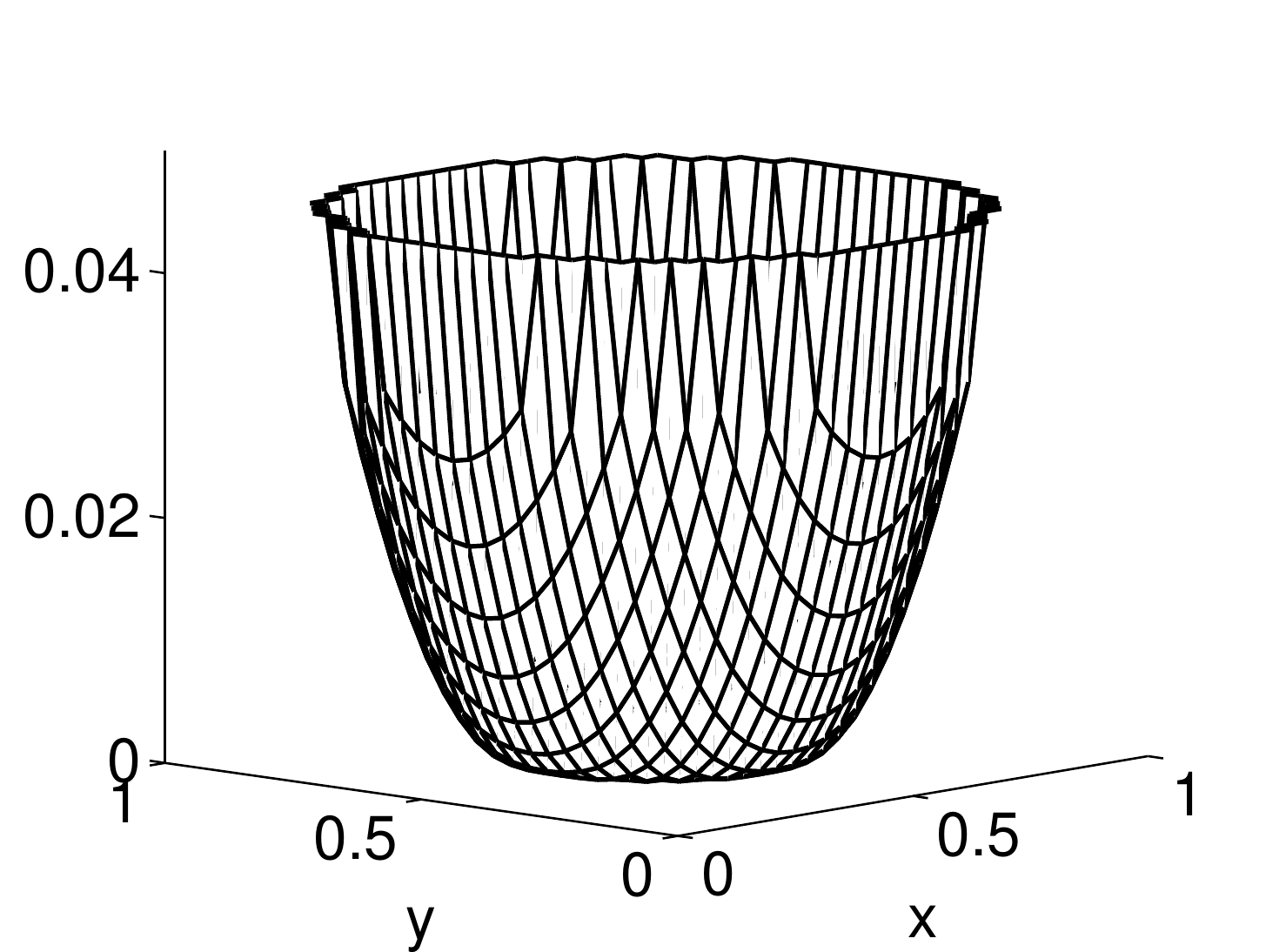}\label{fig:bowl2_surf}}
        \subfigure[]{\includegraphics[width=.4\textwidth]{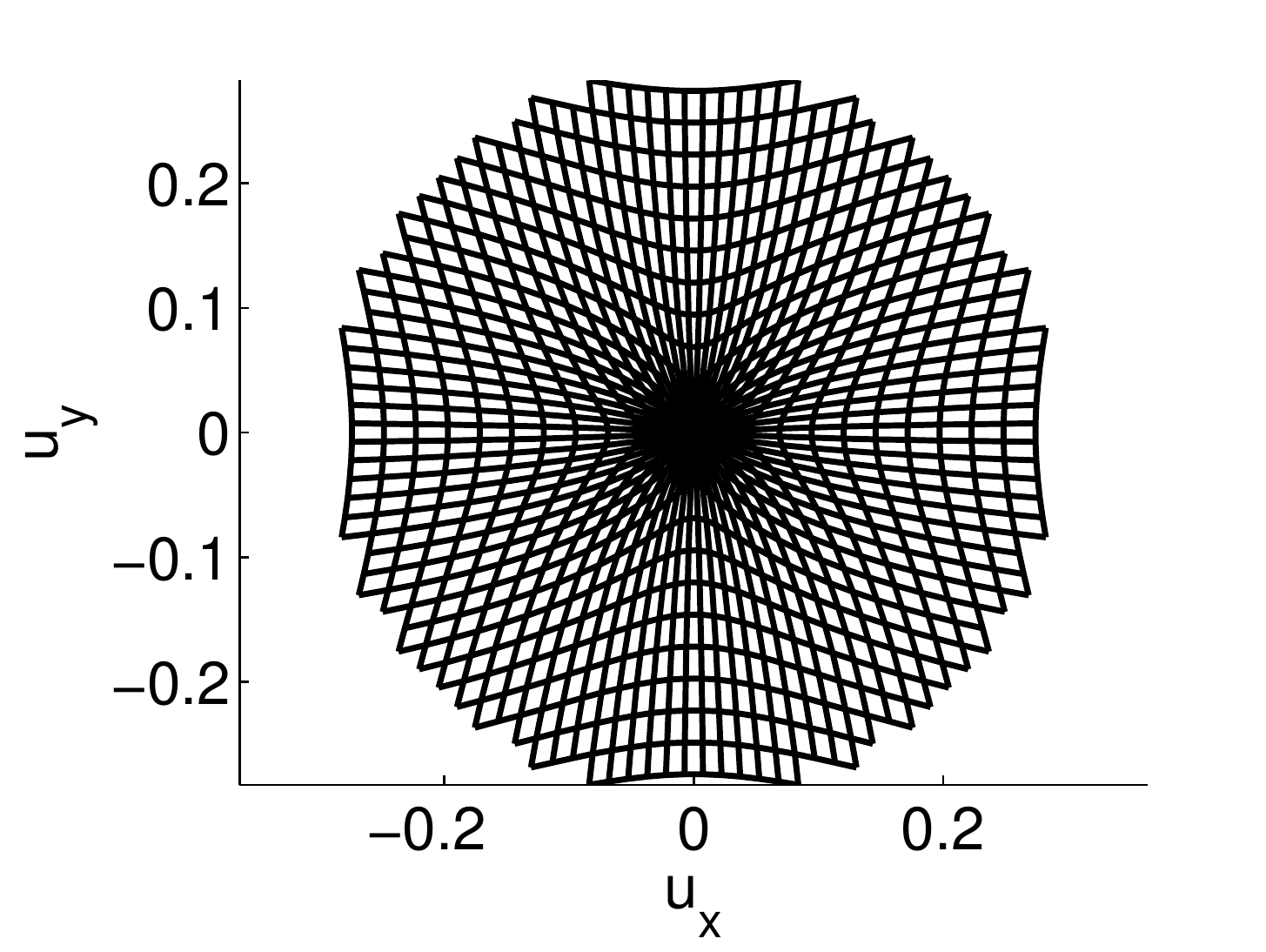}\label{fig:bowl2_mesh}}	
        \subfigure[]{\includegraphics[width=.4\textwidth]{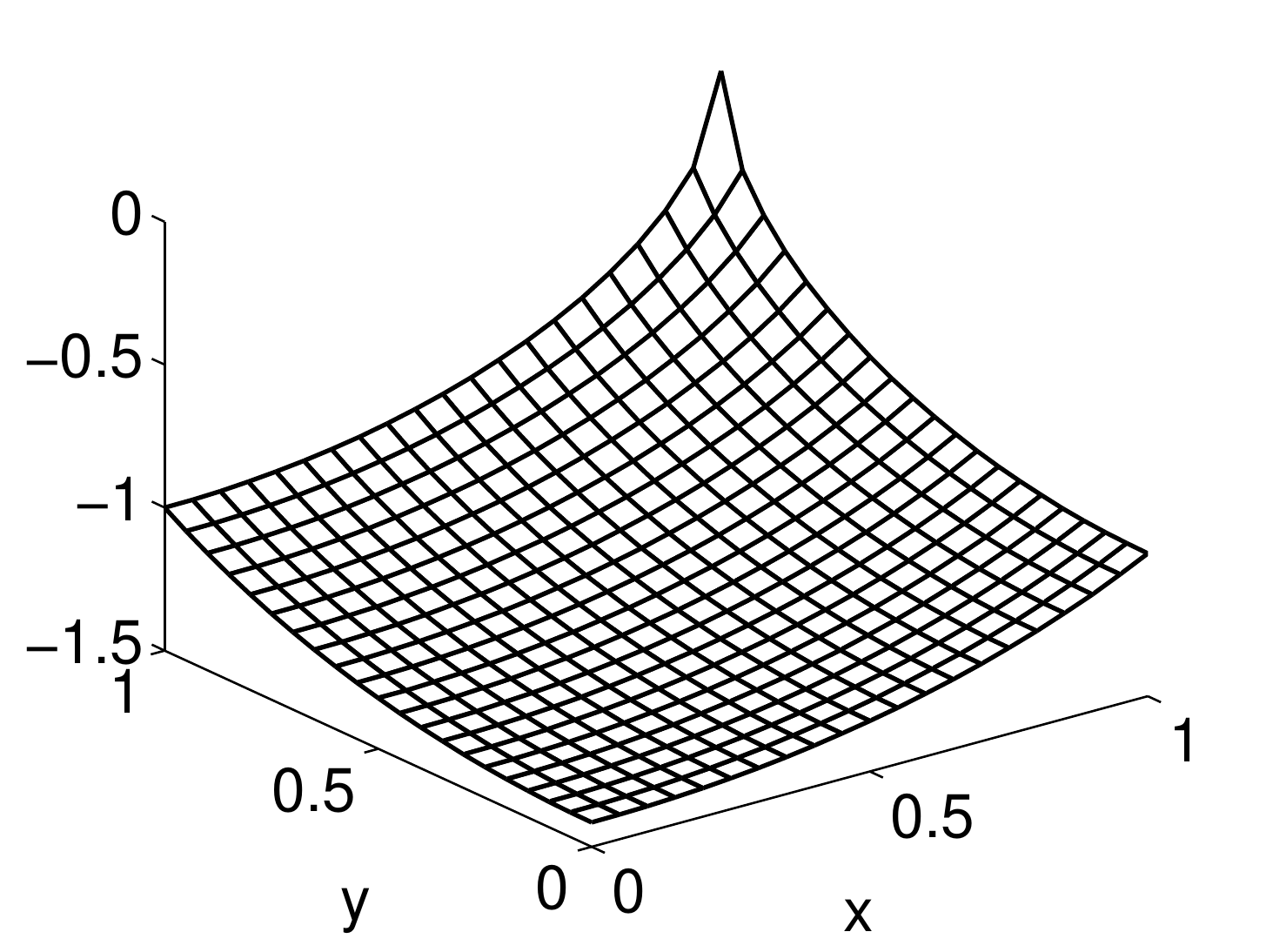}\label{fig:ball_surf}}
        \subfigure[]{\includegraphics[width=.4\textwidth]{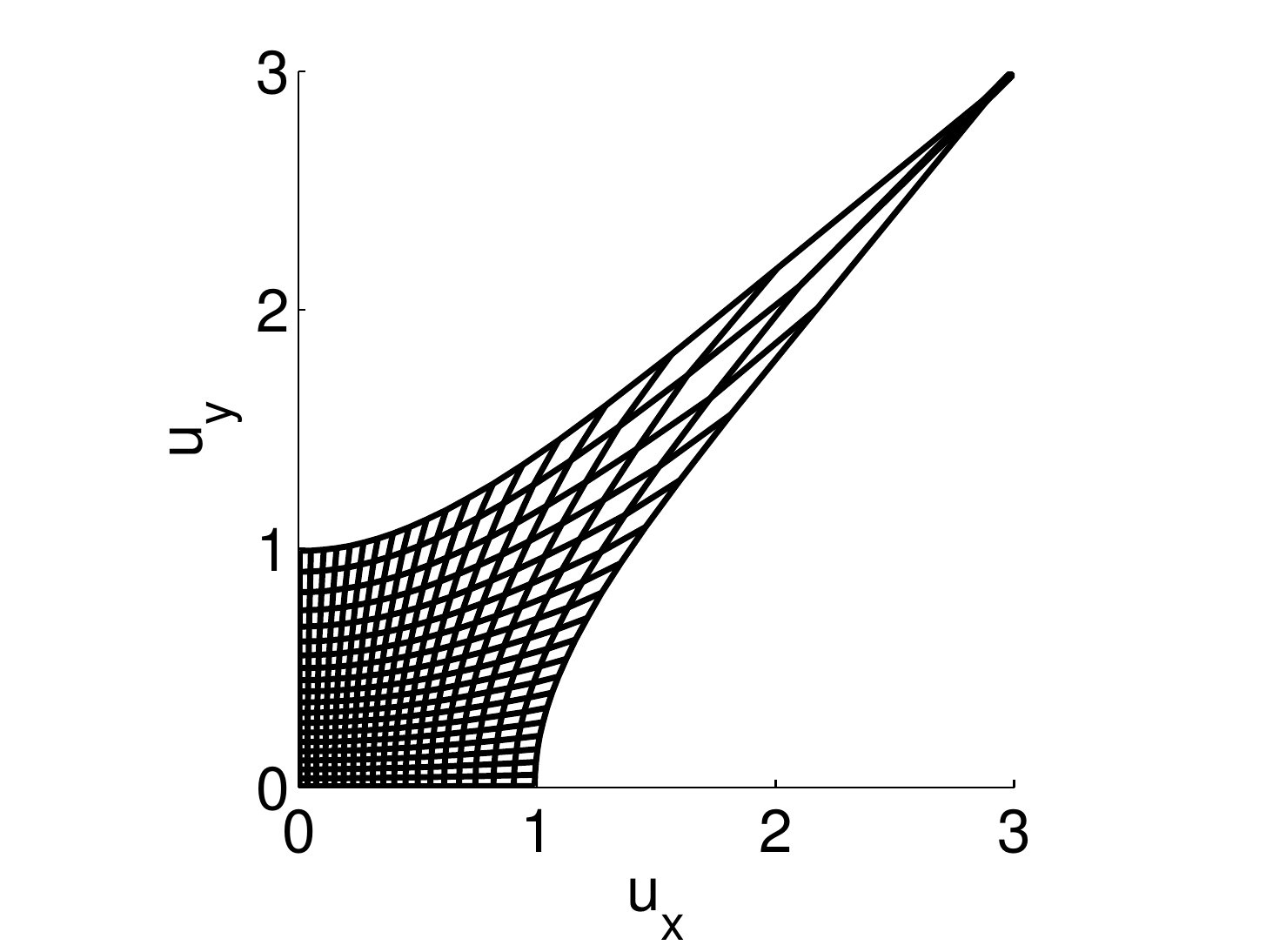}\label{fig:ball_mesh}}
  	\caption{Solutions and mappings for the \subref{fig:init_surf},\subref{fig:init_mesh} identity map, \subref{fig:radial_surf},\subref{fig:radial_mesh} $C^2$ example, \subref{fig:bowl2_surf},\subref{fig:bowl2_mesh} $C^1$ example, and \subref{fig:ball_surf},\subref{fig:ball_mesh} example with blow-up.}
  	\label{fig:solutions}
\end{figure}

\begin{table}[htdp]
\begin{tabular}{c|ccc}
 & \multicolumn{3}{c}{Regularity of Solution} \\
Method  & $C^{2,\alpha}$   &   $C^{1,\alpha}$  & $C^{0,1}$ (Lipschitz)
\vspace{.05cm}\\
\hline
Gauss-Seidel & Moderate  & Moderate &  Moderate \\
             &($\sim\bO(M^{1.8})$) &($\sim\bO(M^{1.9})$) & ($\sim\bO(M^2)$)\\
Poisson      & Fast    & Slow--Fast  & Slow \\
             & ($\sim\bO(M^{1.4}$)  & ($\sim\bO(M^{1.4})$--blow-up) & ($\sim\bO(M^2)$--blow-up)\\
Newton       & Fast     & Fast     & Fast  \\
             & ($\sim\bO(M^{1.3})$) & ($\sim\bO(M^{1.3})$) & ($\sim\bO(M^{1.3})$)\
\vspace{0.2cm}\\
\end{tabular}
\caption{Approximate computation time required by the Gauss-Seidel, Poisson, and Newton's methods for two-dimensional problems of varying regularity.  Here $M = N^2$ is the total number of grid points.}
\label{table:time}
\end{table}

\subsection{Computation time}\label{sec:time2d}
The computation times for the four representative examples are presented in \autoref{table:time2d}.
The computations time are compared to those for the Gauss-Seidel and Poisson iterations described in~\cite{BeFrObMA}.  
In each case, the Newton solver is faster in terms of absolute solution time. 

\autoref{table:time} presents order of magnitude solution times.
The order of magnitude solution time for Newton's method 
is independent of the regularity of the solution and faster than both of the other methods.
In particular, the computation time required by our Newton solver is roughly $\bO(M^{1.3})$.  This is really dependent on two different things: the number of Newton iterations required, which can increase as the grid is refined, and the speed of the linear solver.  The use of a more efficient linear solver might further reduce the order of magnitude solution time for this method.

\begin{table}[htdp]
\begin{tabular}{rrrrr}
\multicolumn{5}{c}{$C^2$ Example \eqref{eq:c2}}\\
$N$  & Newton      & \multicolumn{3}{c}{CPU Time (seconds)} \\
   & Iterations  & Newton   &   Poisson  & Gauss-Seidel
\vspace{.05cm}\\
\hline
31  &  3 & 0.2 & 0.7 & 2.2\\
63  &  6 & 0.9 & 1.9 & 15.0\\
127 &  7 & 4.9 & 9.6 & 236.7\\
255 &  7 & 43.5 & 52.6 & ---\\
361 &  7 & 172.8 & 162.6 & ---\\
\hline\hline\\
\multicolumn{5}{c}{$C^1$ Example \eqref{eq:c1}}\\
$N$   & Newton      & \multicolumn{3}{c}{CPU Time (seconds)} \\
   & Iterations  & Newton   &   Poisson  & Gauss-Seidel
\vspace{.05cm}\\
\hline
31  &  4 & 0.3 & 1.1 & 0.8\\
63  &  7 & 0.8 & 20.5 & 9.5\\
127 &  11 & 6.6 & 256.8 & 145.5\\
255 &  16 & 61.5 & --- & ---\\
361 &  20 & 350.4 & --- & ---\\
\hline\hline\\
\multicolumn{5}{c}{Example with blow-up \eqref{eq:blowup}}\\
$N$   & Newton      & \multicolumn{3}{c}{CPU Time (seconds)} \\
   & Iterations  & Newton   &   Poisson  & Gauss-Seidel
\vspace{.05cm}\\
\hline
31  &  4  & 0.3 & 0.5 & 0.8\\
63  &  4 & 0.5 & 2.9 & 19.4\\
127 &  5 & 3.7 & 17.7 & 293.3\\
255 &  7 & 41.4 & 128.2 & ---\\
361 &  9 & 184.1 & 374.5 & ---\\
\hline\hline\\
\multicolumn{5}{c}{$C^{0,1}$ (Lipschitz) Example \eqref{eq:cone}}\\
$N$   & Newton      & \multicolumn{3}{c}{CPU Time (seconds)} \\
   & Iterations  & Newton   &   Poisson  & Gauss-Seidel \\
\vspace{.05cm}\\
\hline
31  &  9 & 0.5 & 5.3 & 0.8\\
63  &  15 & 1.4 & 91.9 & 21.5\\
127 &  32 & 14.1 & 1758.2 & 373.9\\
255 &  34 & 101.7 & --- & ---\\
361 &  29 & 280.2 & --- & ---
\end{tabular}
\caption{Computation times for the Newton, Poisson, and Gauss-Seidel methods for four representative examples.}
\label{table:time2d}
\end{table}

\subsection{Accuracy}\label{sec:accuracy}
Finally, we present accuracy results for the four representative examples; see~\autoref{table:err2d}.  We perform the computations using the monotone scheme on 9, 17, and 33 point stencils.  
The accuracy of the scheme is determined by a combination of the directional resolution ($d\theta$) error and the spatial discretization error.
Widening the stencil, which has the effect of decreasing $d\theta$, improves the accuracy, as does increasing the number of grid points.
We also compared the accuracy to standard finite differences using the results of~\cite{BeFrObMA}.   (Solution times were longer using these methods).
Standard finite differences are formally more accurate since there is no $d\theta$ error.   The numerical results show that for the first two, more regular examples, the standard finite differences are more accurate.  However, for the two more singular examples, the monotone finite differences are slightly more accurate.

\begin{table}[htdp]
\begin{tabular}{rrrrr}
\multicolumn{5}{c}{{Max Error,} $C^2$ Example \eqref{eq:c2}}\\
$N$  & 9 Point   &   17 Point  & 33 Point & F.D.
\vspace{.05cm}\\
\hline
31 & $17.9\ex{4}$ & $8.9\ex{4}$& $7.0\ex{4}$ & $7.14\ex{5}$\\
63 & $16.2\ex{4}$ & $5.1\ex{4}$& $3.1\ex{4}$ & $1.73\ex{5}$ \\
127 & $15.9\ex{4}$& $4.6\ex{4}$& $1.8\ex{4}$ & $4.3\ex{6}$\\
255 & $15.9\ex{4}$& $4.4\ex{4}$& $1.5\ex{4}$  & $1.1\ex{6}$\\
361 & $15.9\ex{4}$& $4.4\ex{4}$& $1.5\ex{4}$  & $0.5\ex{6}$\\
\hline\hline\\
\multicolumn{5}{c}{{Max Error,}  $C^1$ Example \eqref{eq:c1}}\\
$N$   & 9 Point   &   17 Point  & 33 Point & F.D.
\vspace{.05cm}\\
\hline
31  & $3.0\ex{3}$& $1.7\ex{3}$& $1.5\ex{3}$ &   $2.6\ex{4}$\\
63  & $2.5\ex{3}$& $1.0\ex{3}$& $0.6\ex{3}$ & $1.5\ex{4}$\\
127 & $2.3\ex{3}$& $0.8\ex{3}$& $0.3\ex{3}$ &  $0.6\ex{4}$ \\
255 & $2.2\ex{3}$& $0.7\ex{3}$& $0.3\ex{3}$ & --- \\
361 & $2.2\ex{3}$& $0.7\ex{3}$& $0.3\ex{3}$ & ---\\
\hline\hline\\
\multicolumn{5}{c}{{Max Error,} Example with blow-up \eqref{eq:blowup}}\\
$N$   & 9 Point   &   17 Point  & 33 Point & Standard F.D.
\vspace{.05cm}\\
\hline
31  & $1.7\ex{3}$& $1.7\ex{3}$& $1.7\ex{3}$ & $17.15\ex{3}$\\
63  & $0.9\ex{3}$& $0.6\ex{3}$& $0.6\ex{3}$ & $12.53\ex{3}$\\
127 & $0.8\ex{3}$& $0.3\ex{3}$& $0.2\ex{3}$  & $9.00\ex{3}$\\
255 & $0.8\ex{3}$& $0.3\ex{3}$& $0.2\ex{3}$ & $6.42\ex{3}$\\
361 & $0.8\ex{3}$& $0.3\ex{3}$& $0.2\ex{3}$ & $5.41\ex{3}$ \\
\hline\hline\\
\multicolumn{4}{c}{{Max Error,} $C^{0,1}$ (Lipschitz) Example \eqref{eq:cone}}\\
$N$  & 9 Point   &   17 Point  & 33 Point & Standard F.D.
\vspace{.05cm}\\
\hline
31  & $12\ex{3}$& $3\ex{3}$& $3\ex{3}$  & $10\ex{3}$ \\
63  & $11\ex{3}$& $3\ex{3}$& $2\ex{3}$  & $6\ex{3}$ \\
127 & $11\ex{3}$& $4\ex{3}$& $2\ex{3}$& $3\ex{3}$ \\
255 & $11\ex{3}$& $4\ex{3}$& $1\ex{3}$ & --- \\
361 & $11\ex{3}$& $4\ex{3}$& $1\ex{3}$ & --- 
\end{tabular}
\caption{Accuracy of the monotone scheme for different stencil widths and for standard finite differences on four representative examples.}
\label{table:err2d}
\end{table}

\section{Computational results in three dimensions}\label{sec:3d}
Next, we perform computations to test the speed and accuracy of Newton's method for three-dimensional problems.
These computations are done using a 19 point stencil on an $N^3$ grid on the square $[0,1]^3$.

The methods of~\cite{BeFrObMA} were restricted to the two-dimensional \MA equation, so no computations were available for comparison in three dimensions.

As before, we  performed computations on three representative exact solutions of varying regularity.  To simplify the following expressions, we denote a vector in $\R^3$ by
\[ 
\xv = (x,y,z), 
\]
and let
\[ 
\xo = (.5, .5, .5) 
\]
be the center of the domain.
The first example solution is the $C^2$ function
\bq\label{eq:c23d} 
u(\xv) = \exp{\left( \frac{\abs{\xv}^2}{2}\right)}, 
\qquad 
f(\xv) = \left(1+\abs{\xv}^2\right)\exp{ \left(  \frac{ 3\abs{\xv}^2}2 \right)}. 
\eq
The second example solution is the $C^1$ function
\bq\label{eq:c13d} 
u(\xv) =  
 \left(   (\abs{\xv-\xo}-0.2)^{+}  \right )^2  /{2},
\eq
\[ 
f(\xv) = \begin{cases}
1 - \frac{0.4}{\abs{\xv-\xo}}+\frac{0.04}{\abs{\xv-\xo}^2}, 
 &\abs{\xv-\xo}>0.2,
\\
0 & \text{otherwise.}
\end{cases}\]
The third example is the surface of a ball, which is differentiable in the interior of the domain, but has an unbounded gradient at the boundary:
\bq\label{eq:blowup3d}
u(\xv) = -\sqrt{3-\abs{\xv}^2},
\qquad 
f(\xv) = 3\left (3-\abs{\xv}^2 \right)^{-5/2}.
\eq

Computation times and accuracy results for the three-dimensional examples are presented in~\autoref{table:3d}.

\begin{table}[htdp]
\begin{tabular}{cccc}
\multicolumn{4}{c}{$C^2$ Example \eqref{eq:c23d}}\\
$N$ & Max Error & Iterations  & CPU Time (s) 
\vspace{.05cm}\\
\hline
7  & 0.0151 & 2 & 0.1 \\
11 & 0.0140 & 3 & 0.1 \\
15 & 0.0132 & 5 & 0.5 \\
21 & 0.0127 & 6 & 3.6 \\
31 & 0.0125 & 5 & 34.7\\
\vspace{0.02cm}\\
\hline\hline\\
\multicolumn{4}{c}{$C^1$ Example \eqref{eq:c13d}}\\
$N$ & Max Error & Iterations  & CPU Time (s) 
\vspace{.05cm}\\
\hline
7 & 0.0034 & 1 & 0.02\\
11 & 0.0022 & 1 & 0.06\\
15 & 0.0019 & 1 & 0.17\\
21 & 0.0020 & 2 & 1.42\\
31 & 0.0019 & 2 & 16.70\\
\vspace{0.02cm}\\
\hline\hline\\
\multicolumn{4}{c}{Example with Blow-up \eqref{eq:blowup3d}}\\
$N$ & Max Error & Iterations  & CPU Time (s) 
\vspace{.05cm}\\
\hline
7  &  $9.6\ex{3}$ & 1 & 0.02\\
11 & $5.3\ex{3}$ & 3 & 0.12\\
15 & $4.7\ex{3}$ & 3 & 0.34\\
21 & $4.3\ex{3}$ & 6 & 3.65\\
31 & $3.9\ex{3}$ & 8 & 54.70
\end{tabular}
\caption{Maximum error and computation time on three representative three-dimensional examples.}
\label{table:3d}
\end{table}

\section{Conclusions}
A fast, convergent finite difference solver for the elliptic \MA equation was built, analyzed, and implemented.
Computational results were presented using two- and three-dimensional exact solutions of varying regularity, from smooth to non-differentiable.

A monotone discretization was built using a method which applied arbitrary dimensions. 
 A proof of convergence of the finite difference approximation to the unique viscosity solution of the equation was given.  
  
The discretized equations were solved using Newton's method, which is fast, experimentally  $\bO(M^{1.3})$ where $M$ is the number of data points, independent of the regularity of the solution.  The implementation of Newton's method was significantly (orders of magnitude) faster than the two other methods used for comparison.  
A close initial approximate solution was provided using a low cost method based on performing one step of a previously established solution method, then taking the convex envelope as needed.
A proof of convergence of Newton's method was also provided.

The solver presented here used a novel discretization in general dimensions, accompanied by a fast solution method.  
In terms of solution time and stability, the resulting solver is a significant improvement over existing methods for the solution of the elliptic \MA equation.  This conclusion is valid for both regular and singular solutions.   On smooth solutions the accuracy was lower than when using standard finite differences, but on singular solutions the accuracy was slightly better.   

After the present paper was submitted, we succeeded in
improving the accuracy of the discretization by applying
the monotone method in instances when stability considerations
were crucial, and using a more accurate solver otherwise
~\cite{FroeseObermanFastMA}. Similarly, we have since
extended these methods to more general \MA type equations and
have investigated the use of the various boundary conditions
that arise in the mapping problem ~\cite{FroeseMATransport}.

\bibliographystyle{plain}
\bibliography{MongeAmpere}

\end{document}